\newtheorem{theorem}{Theorem}[section]
\newtheorem{corollary}[theorem]{Corollary}
\theoremstyle{definition}
\newtheorem{proposition}[theorem]{Proposition}
\theoremstyle{remark}
\newtheorem{remark}[theorem]{Remark}
\numberwithin{equation}{section}
\begin{document}

\title[Fractional SPDE for Random Tangent Fields]{Fractional Stochastic
	Partial Differential Equation for Random Tangent Fields on the Sphere}

\author{Vo V. Anh}
\address{Faculty of Science, Engineering and Technology, Swinburne
	University of Technology, PO Box 218, Hawthorn, Victoria 3122, Australia}
\email{vanh@swin.edu.au}
\author{Andriy Olenko}
\address{Department of Mathematics and Statistics, La Trobe University,
	Melbourne, VIC, 3086, Australia}
\email{a.olenko@latrobe.edu.au}
\author{Yu Guang Wang}
\address{Max Planck Institute for Mathematics in the Sciences, Inselstrasse
	22, 04103 Leipzig, Germany}
\curraddr{Institute of Natural Sciences,  School of Mathematical Sciences, and Key Laboratory of Scientific and Engineering Computing of Ministry of Education (MOE-LSC),
	Shanghai Jiao Tong University, Shanghai 200240, China}
\email{yuguang.wang@mis.mpg.de}

\subjclass[2020]{Primary 35R60, 60H15, 35R11; Secondary 60G60, 33C55, 60G22}

\date{08/JUL/2021}

\keywords{Fractional stochastic partial differential equation, random
	tangent field, vector spherical harmonics, fractional Brownian motion}

\begin{abstract}
This paper develops a fractional stochastic partial differential equation
(SPDE) to model the evolution of a random tangent vector field on the unit
sphere. The SPDE is governed by a fractional diffusion operator to model the
L\'{e}vy-type behaviour of the spatial solution, a fractional derivative in
time to depict the intermittency of its temporal solution, and is driven by
vector-valued fractional Brownian motion on the unit sphere to characterize
its temporal long-range dependence.  The solution to the SPDE is presented in
the form of the Karhunen-Lo\`{e}ve expansion in terms of vector spherical
harmonics. Its covariance matrix function is established as a tensor field
on the unit sphere that is an expansion of Legendre tensor kernels.
Approximations to the solutions are studied and convergence rates of the
approximation errors are given. It is demonstrated how these convergence
rates depend on the decay of the power spectrum and variances of the
fractional Brownian motion.
\end{abstract}

\maketitle

\section{Introduction}

Stochastic partial differential equations (SPDE) on the unit sphere $\mathbb{%
	S}^{2}$ in $\mathbb{R}^{3}$ have many applications in geophysics and
environmental modelling \cite%
{bolin2011spatial,brillinger1997particle,castruccio2013global,stein2013stochastic,stein2007spatial}%
. Various fractional versions of SPDEs have also been constructed in diverse
applications \cite%
{anh2017stochastic,anh2004riesz,anh2016fractional,beskos2015bayesian,Ovidio2014,Ovidio2016,hristopulos2003permissibility,hu2016rate,inahama2013laplace,lyons1998differential,Manikin}%
. A distinct merit of these fractional models is that they can be used to
maintain long-range dependence in the evolution of complex systems, such as
models of climate change and density fluctuations in the primordial universe
as inferred from the cosmic microwave background~(CMB) \cite{anh2018approximation,BKLO2019,BKLOO2020,castruccio2013global,Planck2016I,Planck2016IX,Planck2016XI}.

Vector fields defined on a spherical surface have also been extensively
studied. Terrestrial processes such as wind fields, oceanic currents, the
Earth's electric and magnetic fields are some of the most well-studied
examples \cite{atlas2001effects,sabaka2010mathematical,zhang2007assimilation}. Because the horizontal scale of the atmosphere and the oceans far exceeds
their vertical scale, it is natural to decompose such vector fields into
tangential and radial components and treat them separately. For example, in
space weather, the Earth's electric and magnetic fields associated with
ionospheric electric currents are distinct in the tangential and radial
directions, and many of the ionospheric electrodynamic processes can be
treated as tangent vector fields on a sphere \cite{richmond1988mapping}. In
meteorology, the wind flow at the Earth's surface level is another example
of a tangent vector field defined on a sphere \cite{pan2001vector}.

The theory of general vector temporal-spatial random
fields on compact two-point homogeneous spaces (that include the sphere) was
developed in \cite{MaMal2020}, which also includes numerous references on
the topic. However, the class of tangential random fields requires special
attention as, except for the degenerated cases, it is not a subclass of
stationary vector fields. In the general case, it requires an application of
the theory of random sections of vector and tensor bundles over $\sph{2}.$
Therefore, this paper explores an approach that is different from \cite%
{MaMal2020} as outlined below.

Modeling and analysis of vector fields (not necessarily tangential to a
sphere) is facilitated by the celebrated Helmholtz-Hodge decomposition that
expresses the vector field as the sum of a curl-free component, a
divergence-free component and a component that is the gradient of a harmonic
function \cite{FrSc2009,harouna2010helmholtz,gia2019favest,li2019fast}. The
harmonic component vanishes when the domain is a spherical surface and the
vector field is tangential to this surface. Scheuerer and Schlather~\cite%
{scheuerer2012covariance} constructed random vector fields that are either
curl-free or divergence-free; Schlather et al.~\cite{schlather2015analysis}
introduced a parametric cross-covariance model for these random vector
fields. Fan et al. \cite{fan2018modeling} constructed Gaussian tangential
vector fields on the unit sphere that are either curl-free or
divergence-free, then utilized the Helmholtz-Hodge decomposition to
construct general Gaussian tangent vector fields, called \emph{Tangent Mat%
	\'{e}rn Model}, from a pair of correlated Gaussian scalar potential fields.
For computations, a fast algorithm of vector spherical harmonic transforms
for tangent vector fields on the two-dimensional sphere was developed in
\cite{gia2019favest} and the tensor needlet approximation for spherical
tangent vector fields and its fast transform algorithm were recently
established in \cite{li2019fast}. The latter is an extension of its scalar
case \cite{le2017needlet}.

This paper will develop a fractional stochastic partial differential
equation to model the evolution of tangent vector fields on the unit sphere.
The model will take the following form:
\begin{equation}
	\pdt{t}\sol(t,\PT{x})=-\psi (-\LBo)\sol(t,\PT{x})+\IntD{\vfBmsph(t,\PT{x})}%
	,\quad t\geq 0,\;\PT{x}\in \sph{2}.  \label{eq:vfSDE.S2}
\end{equation}%
Here, $\sol(t,\PT{x})$ is a $\C^{3}$-valued random tangent (vector) field on
the sphere $\sph{2}\subseteq \Rd[3]$. The time derivative $\frac{\partial
	^{{}\beta }u}{\partial t^{{}\beta }}$ of order $\beta \in (0,1]$ is defined
as
\begin{equation*}
	\displaystyle\frac{\partial ^{\beta }}{\partial t^{\beta }}u\left(
	t,x\right) =\left\{
	\begin{array}{ll}
		\displaystyle\frac{\partial }{\partial t}u\left( t,x\right) , & \mbox{if}%
		~\beta =1, \\[3mm]
		\left( \mathcal{D}_{t}^{{}\beta }u\right) (t,x), & \mbox{if}~\beta \in (0,1),%
	\end{array}%
	\right.
\end{equation*}%
where
\begin{equation*}
	\left( \mathcal{D}_{t}^{{}\beta }u\right) \left( t,x\right) =\frac{1}{\Gamma
		\left( 1-\beta \right) }\left[ \frac{\partial }{\partial t}%
	\int_{0}^{t}\left( t-\tau \right) ^{-\beta }u\left( \tau ,x\right) d\tau -%
	\frac{u(0,x)}{t^{{}\beta }}\right] ,\quad 0<t\leq T,
\end{equation*}%
is the regularized fractional derivative or fractional derivative in the
Caputo-Djrbashian sense (see Appendix A of \cite{anh2001spectral}). The
operator $\psi (-\LBo)$ is the fractional diffusion operator which, for $%
\alpha \in (0,2]$ and $\alpha +\gamma \in \lbrack 0,2]$, is the function $%
\psi (s)=s^{\alpha /2}(1+s)^{\gamma /2}$ of the Laplace-Beltrami operator $%
\LBo$ on $\sph{2}.$ It is {an operator in a $C_{0}$-semigroup on the $L_{2}$%
	-space of spherical tangent fields} {with respect to the L\'{e}vy measure
	associated with the function $\psi $ }(to be defined in detail in Subsection
4.1). The driving noise $\vfBmsph(t,\PT{x})$ is a
tangential fractional Brownian motion on $\sph{2},$ that is defined in terms
of the vector spherical harmonics $\{(\dsh,\csh)\setsep\ell \geq 1,m=-\ell
,\dots ,\ell \}$ as
\begin{equation*}
	\vfBmsph(t,\PT{x}):=\sum_{\ell =1}^{\infty }\sum_{m=-\ell }^{\ell }\left( %
	\BMa(t)\dsh(\PT{x})+\BMb(t)\csh(\PT{x})\right) ,\;t\geq 0,\ \PT{x}\in \sph{2}%
	,
\end{equation*}%
where $\BMa(s)$ and $\BMb(s)$, $s\geq 0$, are independent fractional
Brownian motions on $\Rplus$ with Hurst index $\hurst\in \lbrack 1/2,1)$.

Using the expansion
\begin{equation*}
	\sol(t,\PT{x})=\sum_{\ell =1}^{\infty }\sum_{m=-\ell }^{\ell }\left( \hat{%
		\mathbf{X}}_{\ell m}(t)\dsh(\PT{x})+\tilde{\mathbf{X}}_{\ell m}(t)\csh(\PT{x}%
	)\right)
\end{equation*}%
of $\sol(t,\PT{x})$ in terms of the vector spherical harmonics $\{(\dsh,\csh)%
\setsep\ell \geq 1,m=-\ell ,\dots ,\ell \}$ (detailed in Section~\ref%
{sec:vsh}), the fractional differential operator and the fractional
Brownian motion in Eq.~\eqref{eq:vfSDE.S2} can be properly defined.

The solution to the equation \eqref{eq:vfSDE.S2} is then given as an
expansion in terms of vector spherical harmonics:
\begin{align}
    \sol(t,\PT{x})& =\sum_{\ell =1}^{\infty }\sum_{m=-\ell }^{\ell }\left(
    \int_{0}^{t}\left( t-s\right) ^{\beta -1}E_{\beta ,\beta }(-\left(
    t-s\right) ^{\beta }\psi (\eigvs))\IntBa(s)\dsh(\PT{x})\right.  \notag \\
    & \left. \hspace{2.7cm}+\int_{0}^{t}\left( t-s\right) ^{\beta -1}E_{\beta
        ,\beta }(-\left( t-s\right) ^{\beta }\psi (\eigvs))\IntBb(s)\csh(\PT{x}%
    )\right) ,  \notag
\end{align}%
where
\begin{equation}
	E_{a,b}(z):=\sum_{k=0}^{\infty }\frac{z^{k}}{\Gamma (ak+b)},\quad z\in \C,\
	a,b>0,  \label{eq:Mittag.Leffler.fun}
\end{equation}%
is the generalized Mittag-Leffler function with index $a$ and $b$.

Then, the solution to \eqref{eq:vfSDE.S2} under the initial condition $\sol%
(0,\PT{x})=\posol(t_{0},\PT{x})$ will be consider. Here $\posol(t_{0},\PT{x}%
) $ is the solution at $t=t_{0}$ of the following fractional stochastic
Cauchy problem:
\begin{equation}
	\pdt{t}\posol(t,\PT{x})=-\psi (-\LBo)\posol(t,\PT{x}),\quad t\geq 0,\;\PT{x}%
	\in \sph{2},  \label{eq:sCau.pb}
\end{equation}%
which is solved under the initial condition $\posol(0,\PT{x})=\rvf_{0}(\PT{x}%
)$ for a random tangent field $\rvf_{0}(\PT{x})$ on the sphere $\sph{2}$.

Eqs.~\eqref{eq:vfSDE.S2} and \eqref{eq:sCau.pb} can be used to describe the
evolution of two-stage stochastic systems. Eq.~\eqref{eq:sCau.pb} determines
the evolution on the time interval $[0,t_{0}]$ while Eq.~\eqref{eq:vfSDE.S2}
represents the solution for a system perturbed by fractional Brownian motion
on the interval $[t_{0},t_{0}+t]$. CMB is an example of such systems, as it
passed through different formation epochs, inflation, recombinatinon, etc.
\cite{Dodelson2003}.

Some background materials on vector spherical harmonics, random tangent
fields and vector-valued fractional Brownian motion are presented in
Sections~\ref{sec:vsh} and \ref{sec:random tangent fields} to prepare for
the formulation of the fractional SPDE model for random tangent fields in
Section~\ref{sec:vSDE}. Under the initial condition $\sol(t,\PT{x})=\posol%
(t_{0},\PT{x})$, the solution of \eqref{eq:vfSDE.S2} is given by
\begin{align}
    \sol(t,\PT{x})& =\sum_{\ell =1}^{\infty }\sum_{m=-\ell }^{\ell }\left\{
    \left( \dfco{(\rvf_0)}E_{\beta ,1}\bigl(-t_{0}^{\beta }\psi (\eigvs)\bigr)%
    +\int_{0}^{t}\left( t-s\right) ^{\beta -1}\right. \right.\notag \\
    &\times \left. E_{\beta ,\beta }\bigl(-\left(
    t-s\right)^{\beta } \psi (\eigvs)\bigr)\IntBa(s)\right) \dsh(\PT{x})
    +\left( \cfco{(\rvf_0)}E_{\beta ,1}\bigl(%
    -t_{0}^{\beta }\psi (\eigvs)\bigr)\right. \notag \\
    &\left.+\int_{0}^{t}\left( t-s\right) ^{\beta
        -1} \left. E_{\beta ,\beta }\bigl(-\left( t-s\right) ^{\beta }\psi (\eigvs)\bigr)%
    \IntBb(s)\right) \csh(\PT{x})\right\} ,  \label{eq:sol.vfSDE.sCaupb}
\end{align}
where $\dfco{(\rvf_0)}$ and $\cfco{(\rvf_0)}$ are the Fourier coefficients
of $\rvf_{0}(\PT{x})$ in terms of the vector spherical harmonics $\dsh$ and $%
\csh$ respectively. The covariance matrix function of %
\eqref{eq:sol.vfSDE.sCaupb} is then established as a tensor field on $\sph{2}
$ that is an expansion of Legendre tensor kernels. A{pproximations to the
	solutions and their convergence rates are investigated. It is demonstrated
	how these convergence rates depend on the decay of the power spectrum and
	variances of the fractional Brownian motion. }Section 5 derives an upper
bound in the $L_{2}\left( \Omega \times \mathbb{S}^{2}\right) $-norm of the
temporal increments of the solution to Equation (\ref{eq:vfSDE.S2}). The
result shows the interplay between the exponent $\beta $ of the fractional
derivative in time and the Hurst parameter $H$ of the fractional Brownian
motion driving the equation.{\ }

The symbol $C$ will denote constants which are not
important for our exposition. The same symbol may be used for different
constants appearing in the same proof.

\section{Vector Spherical Harmonics}

\label{sec:vsh} Let $\C^{3}$ be the $3$-dimensional complex coordinate
space. In this paper, the vectors in $\C^{3}$ are column vectors. For $\PT{x}%
\in \C^{3}$, the transpose of $\PT{x}$ is the row vector $\PT{x}^{T}$. The
inner product of two vectors $\PT{x}=(x_{1},x_{2},x_{3})^{T}$ and $\PT{y}%
=(y_{1},y_{2},y_{3})^{T}$ in $\C^{3}$ is $\PT{x}\cdot \PT{y}:=\PT{x}^{T}%
\conj{\PT{y}}:=\sum_{j=1}^{3}x_{j}\conj{y_j}$. The Euclidean norm of $\PT{x}$
is $|\PT{x}|=\sqrt{\PT{x}\cdot \PT{x}}$. The tensor product of two vectors $%
\PT{x}$ and $\PT{y}$ in $\C^{3}$ is a matrix product of $\PT{x}$ with the
conjugate transpose of $\PT{y}$: $\PT{x}\otimes \PT{y}=\PT{x}\conj{\PT{y}}%
^{T}$.

\subsection{Function Spaces and Vector Spherical Harmonics}

Let $\sph{2}$ be the unit sphere of the $3$-dimensional real coordinate
space $\Rd[3]$. Let $\Lp{2}{2}:=L_{2}(\sph{2},\sphm)$ be the $L_{2}$-space
of complex-valued functions on $\sph{2}$ with the inner product
$\InnerL[\Lp{2}{2}]{f,g}=\int_{\sph{2}}f(\PT{x})\conj{g(\PT{x})}\IntDiff{x}$
for all $f,g\in \Lp{2}{2}$, 
where $\sphm$ is the Riemann surface measure on $\sph{2}$. A $\C^{3}$-valued
function on $\sph{2}$ is called a \emph{vector field} on $\sph{2}$. For a
vector field $\PT{f}$ on $\sph{2}$ the fields
\begin{equation*}
	\PT{f}_{\mathrm{nor}}(\PT{x}):=(\PT{f}(\PT{x})\cdot \PT{x})\PT{x},\quad %
	\PT{f}_{\mathrm{tan}}(\PT{x}):=\PT{f}(\PT{x})-\PT{f}_{\mathrm{nor}}(\PT{x}%
	),\; \PT{x}\in \sph{2},
\end{equation*}%
are defined to be the \emph{normal vector field} and \emph{tangent vector
	field} of $\PT{f}$ respectively.

Let $\vLp{2}{2}$ be the $L_{2}$-space of $\C^{3}$-valued functions on the
sphere $\sph{2}$ with the inner product
\begin{equation*}
	\InnerL[\vLp{2}{2}]{\PT{f},\PT{g}}:=\int_{\sph{2}}\PT{f}(\PT{x})\cdot \PT{g}(%
	\PT{x})\IntDiff{x},\quad \PT{f},\PT{g}\in \vLp{2}{2},
\end{equation*}
and the induced $L_{2}$-norm $\normb{\PT{f}}{\vLp{2}{2}}:=\sqrt{%
	\InnerL[\vLp{2}{2}]{\PT{f},\PT{f}}}$. The $L_{2}$-space of tangent fields on
$\sph{2}$ is defined as $\tLp{2}{2}:=\bigl\{\PT{f}\in \vLp{2}{2}:\PT{f}=%
\PT{f}_{\mathrm{tan}}\bigr\}$ with the (induced) inner product
\begin{equation*}
	\InnerL{\PT{f},\PT{g}}:=\InnerL[\tLp{2}{2}]{\PT{f},\PT{g}}:=\int_{\sph{2}}%
	\PT{f}(\PT{x})\cdot \PT{g}(\PT{x})\IntDiff{x},\quad \PT{f},\PT{g}\in %
	\tLp{2}{2},
\end{equation*}%
and $L_{2}$-norm $\normb{\PT{f}}{\tLp{2}{2}}:=\sqrt{\InnerL{\PT{f},\PT{f}}}$.

This paper will focus on tangent vector fields and their temporal evolution.
For a tangent vector field $\PT{f}=(f_{1},f_{2},f_{3})^{T}\in \tLp{2}{2}$,
the integral $\int_{\sph{2}}\PT{f}(\PT{x})\IntDiff{x}$ of $\PT{f}$ on $%
\sph{2}$ is the vector
\begin{equation*}
	\left(\int_{\sph{2}}f_{1}(\PT{x})\IntDiff{x},\int_{\sph{2}}f_{2}(\PT{x})%
	\IntDiff{x},\int_{\sph{2}}f_{3}(\PT{x})\IntDiff{x}\right)^{T}.
\end{equation*}
A $\C^{3\times 3}$-valued \emph{tensor field} $\mathbf{u}$ on $\sph{2}$ is a
$\C^{3\times 3}$-valued function on $\sph{2}$. The tensor field $\PT{u}$ can
be written as $\mathbf{u}=\left( \PT{u}_{1},\PT{u}_{2},\PT{u}_{3}\right) $,
where each $\PT{u}_{i}$, $i=1,2,3$, is a vector field on $\sph{2}$.

Let $\LBo$ be the Laplace-Beltrami operator on $\sph{2}$. Let $\shY$, $\ell
\in \Nz,$ $m=-\ell ,\dots ,\ell ,$ be an orthonormal basis of complex-valued
spherical harmonics of $\Lp{2}{2}$. The functions $\shY$, $\ell \in \Nz,$ $%
m=-\ell ,\dots ,\ell,$ are eigenfunctions of $\LBo$ with eignvalues $\eigvm%
:=\ell(\ell +1)$ satisfying
\begin{equation*}
	\LBo\shY = -\eigvm\shY.
\end{equation*}
Let $\nabla_{\cdot}$ be the gradient on $\Rd[3]$. The \emph{surface-gradient}
on $\sph{2}$ is $\sfgrad:=(\sfgrad)_{\cdot}:=P_{\cdot}\nabla_{\cdot }$ with
the matrix $P_{\PT{x}}:=\imat-\PT{x}\otimes \PT{x}$, where $\imat$ is the $%
3\times 3$ identity matrix and $\PT{x}\in \sph{2}$. The \emph{surface-curl}
on $\sph{2}$ is $\sfcurl:=\sfcurl_{\cdot }:=Q_{\cdot}\nabla_{\cdot}$ with
the matrix
\begin{equation*}
	Q_{\PT{x}}:=%
	\begin{pmatrix}
		0 & -x_{3} & x_{2} \\
		x_{3} & 0 & -x_{1} \\
		-x_{2} & x_{1} & 0%
	\end{pmatrix}%
	.
\end{equation*}
Then, it holds that $\sfgrad\cdot \sfgrad=\sfcurl\cdot \sfcurl=\LBo$ (see,
e.g., \cite[Chapter~2]{FrSc2009}).

For degree $\ell \geq 1$, the \emph{divergence-free vector spherical harmonic%
} is
\begin{equation*}
	\dsh=\sfcurl\shY/\sqrt{\eigvs},
\end{equation*}%
and the \emph{curl-free vector spherical harmonic} is
\begin{equation*}
	\csh=\sfgrad\shY/\sqrt{\eigvs}.
\end{equation*}%
The set $\{(\dsh,\csh)\setsep\ell \geq 1,m=-\ell ,\dots ,\ell \}$ forms an
orthonomal basis of $\tLp{2}{2}$ (see, e.g., \cite{FuWr2009} and also \cite[%
Chapter~5]{FrSc2009}). Let $\dfco{\PT{f}}:=\InnerL{\PT{f},\dsh}$ and $%
\cfco{\PT{f}}:=\InnerL{\PT{f},\csh}$ be the \emph{divergence-free and
	curl-free Fourier coefficients} of $\PT{f}\in \tLp{2}{2}$. For $\PT{f}\in %
\tLp{2}{2}$, the representation
\begin{equation*}
	\PT{f}= \sum_{\ell =1}^{\infty }\sum_{m=-\ell }^{\ell }\left( \dfco{\PT{f}}%
	\dsh+\cfco{\PT{f}}\csh\right)
\end{equation*}%
holds in $\tLp{2}{2}$ sence, and the following Parseval identity is
satisfied
\begin{equation}
	\norm{\PT{f}}{\tLp{2}{2}}^{2}=\sum_{\ell =1}^{\infty }\sum_{m=-\ell }^{\ell
	}\left( \bigl|\dfco{\PT{f}}\bigr|^{2}+\bigl|\cfco{\PT{f}}\bigr|^{2}\right) .
	\label{eq:vf.Parseval}
\end{equation}%
For $\ell \geq 0$, let $\Legen(t)$, $t\in \lbrack -1,1],$ be the Legendre
polynomial of degree $\ell $. The divergence-free and curl-free \emph{%
	Legendre (rank-$2$) tensor kernels} of degree $\ell $ are
\begin{equation}
	\dlg(\PT{x},\PT{y})=\frac{2\ell +1}{4\pi \eigvs}\sfcurl_{\PT{x}}\otimes %
	\sfcurl_{\PT{y}}\Legen(\PT{x}\cdot \PT{y}),\;\;\clg(\PT{x},\PT{y})=\frac{%
		2\ell +1}{4\pi \eigvs}(\sfgrad)_{\PT{x}}\otimes (\sfgrad)_{\PT{y}}\Legen(%
	\PT{x}\cdot \PT{y}),\;\; \PT{x},\PT{y}\in \sph{2},  \label{eq:dlg.clg}
\end{equation}%
see \cite[Chapter~5]{FrSc2009}.

The following addition theorem holds for vector spherical harmonics and
Legendre tensor kernels (see \cite[Theorem~5.31]{FrSc2009}). For $\ell \geq
1 $ and $\PT{x},\PT{y}\in \sph{2}$,
\begin{equation}
	\sum_{m=-\ell }^{\ell }\dsh(\PT{x})\otimes \dsh(\PT{y})=\dlg(\PT{x},\PT{y}%
	),\quad \sum_{m=-\ell }^{\ell }\csh(\PT{x})\otimes \csh(\PT{y})=\clg(\PT{x},%
	\PT{y}).  \label{eq:add.thm.vsh}
\end{equation}

\section{Random Tangent Fields and Fractional Brownian Motion}

\label{sec:random tangent fields}

Let $(\probSp,\probB,\probm)$ be a probability space, where $\probB$ is the
Borel algebra on $\Omega $ with probability measure $\probm$. Let $%
\mathscr{B}(\sph{2})$ be the Borel algebra on $\sph{2}$. An $\probB\otimes %
\mathscr{B}(\sph{2})$-measurable function $T:\probSp\times \sph{2}%
\rightarrow \C$ is called a complex-valued random field on the sphere $%
\sph{2}$. For a random variable $X$ on $(\probSp,\mathcal{F},\probm),$ let $%
\expect{X}$ be the expectation of $X$. For $\C^{3}$-valued random vectors $%
\rvec,\mathbf{Y}$ on~$\probSp$, the expectation $\expect{\rvec}$ is a vector
of expectations of components of $\rvec,$ and the covariance matrix of $%
\rvec
$ and $\mathbf{Y}$ is $\cov{\rvec,\mathbf{Y}}=%
\expect{(\rvec-\expect{\rvec})\otimes
	(\mathbf{Y}-\expect{\mathbf{Y}})}$. The variance matrix of $\rvec$ is $%
\var{\rvec}=\cov{\rvec,\rvec}$. Let $\Lpprob{2}:=\Lpprob[,\probm]{2}$ be the
complex-valued $L_{2}$-space on $\probSp$ with respect to the probability
measure $\probm$, endowed with the norm $\norm{\cdot}{\Lpprob{2}}$. Let $%
\Lppsph{2}{2}:=\Lppsph[,\prodpsphm]{2}{2}$ be the complex-valued $L_{2}$%
-space on the product space of $\probSp$ and $\sph{2}$, where $\prodpsphm$
is the corresponding product measure. If it does not cause confusion, the
notation $\otimes $ will be also used as the tensor of two vectors.

\subsection{Random Tangent Fields}

$\rvf(\xi ,\PT{x}),$ $\xi\in \Omega,$ $\PT{x}\in \sph{2},$ is called a
complex-valued \emph{random tangent (vector) field} on $\sph{2}$ if the
components of $\rvf(\xi ,\PT{x})$ are complex-valued random fields on the
sphere $\sph{2}$ and $\rvf(\xi ,\PT{x})=\rvf(\xi ,\PT{x})_{\mathrm{tan}}$.
We will denote $\rvf(\xi ,\PT{x})$ by $\rvf(\PT{x})$, or $\rvf$ for brevity
if no confusion arises. For a random tangent field $\rvf$, its \emph{%
	covariance matrix function} is $\cov{\rvf(\PT{x}),\rvf(\PT{y})}$. Its \emph{%
	variance matrix function} is $\var{\rvf(\PT{x})}:=\cov{\rvf(\PT{x}),\rvf(%
	\PT{x})}$. A random tangent field $\rvf$ on $\sph{2} $ is \emph{Gaussian} if
the random vector $\rvf(\PT{x})$ follows a $3$-variate normal distribution
for each $\PT{x}\in \sph{2}$.

Let $\vLppsph{2}{2}:=\vLppsph[,\prodpsphm]{2}{2}$ be the $L_{2}$-space of
random tangent fields with the product measure $\prodpsphm$, endowed with
inner product
\begin{equation*}
	{\InnerL{\PT{f},\PT{g}}}_{\vLppsph{2}{2}}:=\int_{\prodpsph}\PT{f}(\xi ,\PT{x}%
	)\cdot \PT{g}(\xi ,\PT{x})\IntD{\prodpsphm}(\xi ,\PT{x}),
\end{equation*}%
and induced $L_{2}$-norm $\Vert \PT{f}\Vert _{\vLppsph{2}{2}}:=\sqrt{{%
		\InnerL{\PT{f},\PT{f}}}_{\vLppsph{2}{2}}}$ for $\PT{f},\PT{g}\in %
\vLppsph{2}{2}$.

By Fubini's theorem,
\begin{equation*}
	{\InnerL{\PT{f},\PT{g}}}_{\vLppsph{2}{2}}= \expect{\int_{\sph{2}}\PT{f}(%
		\cdot,\PT{x})\conj{\PT{g}(\cdot,\PT{x})}^{T}\IntDiff{x}}= %
	\expect{\InnerL[\tLp{2}{2}]{\PT{f},\PT{g}}}.
\end{equation*}
In particular,
\begin{equation}  \label{eq:rvf.L2}
	\|\PT{f}\|_{\vLppsph{2}{2}}^{2} = \expect{\|\PT{f}\|_{\tLp{2}{2}}^{2}},
\end{equation}
which implies $\PT{f}(\xi,\cdot)\in\tLp{2}{2}$ $\Pas$.

\subsection{Vector-valued Fractional Brownian Motion}

Let $\hurst\in \lbrack 1/2,1)$ and $\sigma >0$. A real-valued fractional
Brownian motion $\rfBm(t)$, $t\geq 0,$ with the Hurst index $\hurst$ and
variance $\sigma $ at $t=1$ is a (real-valued) centred Gaussian process
satisfying
\begin{equation*}
	\rfBm(0)=0,\quad \expect{\bigl|\rfBm(t)-\rfBm(s)\bigr|^{2}}=|t-s|^{2\hurst%
	}\sigma ^{2},
\end{equation*}%
see, for example, \cite{BiHuOkZh2008}.

A complex-valued Brownian motion $\fBm(t)$, $t\ge0$ with the Hurst index $%
\hurst$ and variance $\sigma$ at $t=1$ is given by
\begin{equation*}
	\fBm(t) = \rfBm[1](t) + \imu \rfBm[2](t),
\end{equation*}
where $\rfBm[1](t)$ and $\rfBm[2](t)$ are two independent real-valued
fractional Brownian motions with the Hurst index $\hurst$ and variance $%
\sigma$ at $t=1$.

For $\hurst\in \lbrack 1/2,1)$, let $\BMa(t)$ and $\BMb(t)$ be independent
centred complex-valued fractional Brownian motions with the Hurst index $%
\hurst$ and variances $A_{\ell }^{1}$ and $A_{\ell }^{2}$ at $t=1$
satisfying
\begin{equation}
	\sum_{\ell =0}^{\infty }(2\ell +1)A_{\ell }^{1}<\infty\quad \mbox{and} \quad
	\sum_{\ell =0}^{\infty }(2\ell +1)A_{\ell }^{2}<\infty  \label{brspect}
\end{equation}
respectively. Following \cite[Definition~2.1]{GrAn1999} and also \cite%
{anh2018approximation}, we define an $\tLp{2}{2}$-valued fractional Brownian
motion $\vfBmsph(t,\PT{x})$ as an expansion with respect to vector spherical
harmonics with $\BMa(t)$ and $\BMb(t)$ as coefficients, i.e.,
\begin{equation}
	\vfBmsph(t,\PT{x}):=\sum_{\ell =1}^{\infty }\sum_{m=-\ell }^{\ell }\left( %
	\BMa(t)\dsh(\PT{x})+\BMb(t)\csh(\PT{x})\right), \; t\geq 0,\ \PT{x}\in %
	\sph{2}.  \label{eq:fBm}
\end{equation}

For a measurable function $g$ on $\Rplus$, the stochastic integral of $g$
with respect to a complex-valued fractional Brownian motion $\fBm(t)$ is
\begin{equation*}
	\int_{0}^{t}g(s)\IntD{\fBm(s)}:=\int_{0}^{t}g(s)\IntD{\rfBm[1](s)}+\imu%
	\int_{0}^{t}g(s)\IntD{\rfBm[2](s)},
\end{equation*}
where each integral $\int_{0}^{t}g(s)\IntD{\rfBm[i](s)}$, $i=1,2$, is
defined as a Riemann-Stietjes integral.

\section{Fractional Stochastic Partial Differential Equations for Tangent
	Fields}

\label{sec:vSDE}

This section presents the solution to Eq.~\eqref{eq:vfSDE.S2} given as an
expansion with respect to vector spherical harmonics. Before solving the
equation, a proper definition of the fractional diffusion operator $\psi (-%
\LBo)$ is presented.

\subsection{Fractional Diffusion Operator for Random Tangent Fields}

{The fractional diffusion operator for vector fields is defined as an
	operator in a $C_{0}$-semigroup on the $L_{2}$-space of spherical tangent
	fields $\tLp{2}{2}$ as follows. By \cite[Section~4.1]{AnMc2004}, for $\alpha
	+\gamma \in (0,2]$, there exists a L\'{e}vy measure $\Levym{t}$ on $%
	[0,\infty )$ such that
	\begin{equation}
		\psi (z)=\int_{0}^{\infty }\left( 1-e^{-tz}\right) \Levym{t}.
		\label{eq:psi.Levy.measure}
	\end{equation}%
	For $t\geq 0$, we define the operator
	\begin{equation}
		\mathrm{P}_{t}\PT{f}:=\sum_{\ell =1}^{\infty }\sum_{m=-\ell }^{\ell }\left( %
		\dfco{\PT{f}}\dsh+\cfco{\PT{f}}\csh\right) e^{-t\eigvm},\quad \PT{f}\in %
		\tLp{2}{2}.  \label{eq:Pt.f}
	\end{equation}%
	Then $\{\mathrm{P}_{t}\}_{t\geq 0}$ forms a $C_{0}$-semigroup for the $L_{2}$
	space of spherical tangent fields $\tLp{2}{2}$. We define the operator $%
	\mathrm{D}^{\psi }$ using the semigroup $\{\mathrm{P}_{t}\}_{t\geq 0}$ and L%
	\'{e}vy measure $\Levym{t}$ associated with $\psi $ by
	\begin{equation*}
		\mathrm{D}^{\psi }\PT{f}:=\int_{0}^{\infty }(\mathrm{P}_{t}\PT{f}-\PT{f})%
		\Levym{t},\quad \PT{f}\in \tLp{2}{2}.
	\end{equation*}%
	By the orthonormality of $\dsh$ and $\csh$ and \eqref{eq:psi.Levy.measure}
	and \eqref{eq:Pt.f}, for $\ell \geq 1$ and $m=-\ell ,\dots ,\ell $,
	\begin{equation*}
		\mathrm{D}^{\psi }\dsh=\int_{0}^{\infty }\dsh(e^{-t\eigvs}-1)\Levym{t}=-\psi
		(\eigvm)\dsh,
	\end{equation*}%
	and similarly,
	\begin{equation*}
		\mathrm{D}^{\psi }\csh=-\psi (\eigvm)\csh.
	\end{equation*}%
	We can thus write
	\begin{equation}
		\psi (-\LBo)\PT{f}:=-\mathrm{D}^{\psi }\PT{f}:=-\int_{0}^{\infty }(\mathrm{P}%
		_{t}\PT{f}-\PT{f})\Levym{t},\quad \PT{f}\in \tLp{2}{2},  \label{eq:fDiffu}
	\end{equation}%
	and call $\psi (-\LBo)$ \emph{fractional diffusion operator} for tangent
	vector fields on $\sph{2}$. For $\rvf\in \vLppsph{2}{2}$, $\rvf(\xi ,\cdot
	)\in \tLp{2}{2}$ $\Pas$, then, one} can define $\psi (-\LBo)\rvf$ for a
random tangent field $\rvf\in \vLppsph{2}{2}$ by $\psi (-\LBo)\rvf(\xi
,\cdot )$ as given by \eqref{eq:fDiffu} for $\xi \in \probSp$.

\begin{remark}
	The expansion of the operator $D^{\psi }\PT{f}\left( \mathbf{x}\right) $ in terms
	of the vector spherical harmonics $\{\mathbf{y}_{l,m},\mathbf{z}_{l,m}):\ell
	\geq 1,m=-\ell ,...,\ell \}$ involves the Laplace exponent of a L\'{e}vy
	subordinator in its coefficients. In fact, from the L\'{e}vy measure $\nu
	\left( dt\right) $ defined by the equation%
	\begin{equation*}
		z^{\alpha /2}\left( 1+z\right) ^{\gamma /2}=\int_{0}^{\infty }\left(
		1-e^{-tz}\right) \nu \left( dt\right) ,
	\end{equation*}%
	we define a L\'{e}vy subordinator (i.e. a non-decreasing L\'{e}vy process) $%
	Z\left( t\right) $ with characteristic function%
	\begin{equation*}
		\mathrm{E}e^{i\xi Z\left( t\right) }=e^{-t\int_{0}^{\infty }\left( e^{i\xi
				\lambda }-1\right) \nu \left( d\lambda \right) }.
	\end{equation*}%
	The Laplace transform of the distribution of this subordinator then can be
	written as%
	\begin{equation*}
		\mathrm{E}e^{-t\Psi \left( \xi \right) }=e^{-t\int_{0}^{\infty }\left(
			1-e^{-\xi \lambda }\right) \nu \left( d\lambda \right) },
	\end{equation*}%
	where $\Psi \left( \xi \right) $ is known as the Laplace exponent or the
	symbol of the subordinator. As defined above,%
	\begin{align*}
		\mathrm{D}^{\psi }\PT{f}\left( \mathbf{x}\right) &=\int_{0}^{\infty}\left(
		P_{t}\PT{f}\left( \mathbf{x}\right) -P_{0}\PT{f}\left( \mathbf{x}\right) \right) \nu
		\left( dt\right)\\
		&=\sum_{l=0}^{\infty }\sum_{m=-l}^{l}\left(\dfco{\PT{f}}\mathbf{y}%
		_{l,m}\left( \mathbf{x}\right) +\cfco{\PT{f}}\mathbf{z}_{l,m}\left( \mathbf{x}%
		\right) \right) \int_{0}^{\infty }\left( e^{-t\lambda _{l}}-1\right) \nu
		\left( dt\right)\\
		&=-\sum_{l=0}^{\infty }\sum_{m=-l}^{l}\left( \dfco{\PT{f}}\mathbf{y}%
		_{l,m}\left( \mathbf{x}\right) +\cfco{\PT{f}}\mathbf{z}_{l,m}\left( \mathbf{x}%
		\right) \right) \Psi \left( \lambda _{l}\right) .
	\end{align*}%
	This result indicates that fractional diffusion operators can also be
	generated by using symbols of L\'{e}vy subordinators.
\end{remark}

\begin{remark}
	Of the same form as $-\psi (-\LBo),$ the  operator $%
	\mathcal{-}\left( -\Delta \right) ^{\alpha /2}\left( I-\Delta \right)
	^{\gamma /2}$ on $\mathbb{R}^{n}$ was investigated in \cite{anh2004riesz}.
	The fractional diffusion operator $\mathcal{-}\left( -\Delta \right) ^{\alpha /2}\left( I-\Delta
	\right) ^{\gamma /2},$ which is the inverse of the composition of the Riesz
	potential $\left( -\Delta \right) ^{-\alpha /2},$ $\alpha \in \left( 0,2%
	\right] ,$ defined by the kernel
	\begin{equation*}
		J_{\alpha }\left( x\right) =\frac{\Gamma \left( n/2-\alpha \right) }{\pi
			^{n/2}4^{\alpha }\Gamma \left( \alpha \right) }\left\vert x\right\vert
		^{2\alpha -n},\quad x\in \mathbb{R}^{n},
	\end{equation*}%
	and the Bessel potential $\left( I-\Delta \right) ^{-\gamma /2},\gamma \geq
	0,\ $defined by the kernel (see \cite{stein1970singular})
	\begin{equation*}
		I_{\gamma }\left( x\right) =\left[ \left( 4\pi \right) ^{\gamma }\Gamma
		\left( \gamma \right) \right] ^{-1}\int_{0}^{\infty }e^{-\pi \left\vert
			x\right\vert ^{2}/s}e^{-s/4\pi }s^{\left( -n/2+\gamma \right) }\frac{ds}{s}%
		,\quad x\in \mathbb{R}^{n},
	\end{equation*}%
	is the infinitesimal generator of a strongly continuous bounded holomorphic
	semigroup of angle $\pi /2$ on $L^{p}\left( \mathbb{R}^{n}\right) $ for $%
	\alpha >0,\alpha +\gamma \geq 0$ and any $p\geq 1$ (\cite{anh2004riesz}).
	This semigroup defines the Riesz-Bessel distribution (and the resulting
	Riesz-Bessel motion) if and only if $\alpha \in (0,2],$ $\alpha +\gamma \in
	\lbrack 0,2]$. When $\gamma =0$, the fractional Laplacian $-\left( -\Delta
	\right) ^{\alpha /2},$ $\alpha \in \left( 0,2\right] $, generates the L\'{e}%
	vy $\alpha $-stable distribution. While the exponent of the inverse of the
	Riesz potential indicates how often large jumps occur, it is the combined
	effect of the inverses of the Riesz and Bessel potentials that describes
	various non-Gaussian behaviours of the process \cite%
	{anh2004riesz,anh2017stochastic}. For example, depending on the sum $\alpha
	+\gamma $ of the exponents of the inverses of the Riesz and Bessel
	potentials, the Riesz-Bessel motion will be either a compound Poisson
	process, a pure jump process with jumping times dense in $[0,\infty )$ or
	the sum of a compound Poisson process and an independent Brownian motion.
\end{remark}

\subsection{Fractional SPDE and Solution}

Using the fractional diffusion operator and vector-valued fractional
Brownian motion as defined above, the fractional stochastic equation %
\eqref{eq:vfSDE.S2} is well-defined. The solution of \eqref{eq:vfSDE.S2}, as
shown by the following theorem, can be written as an expansion in terms of
vector spherical harmonics.

\begin{theorem}
    \label{thm:sol.vfSDE.S2} Let $\alpha \in (0,2]$, $\alpha +\gamma \in \lbrack
    0,2]$, $\beta \in (0,1]$ and $\hurst\in \lbrack 1/2,1)$. Let Eq.~%
    \eqref{eq:vfSDE.S2} be defined with the fractional diffusion operator %
    \eqref{eq:fDiffu}, fractional Brownian motion \eqref{eq:fBm}, Hurst index $%
    \hurst$ and variances $A_{\ell }^{1}$ and and $A_{\ell }^{2}$ at $t=1$
    satisfying
    \begin{equation*}
        \sum_{\ell \geq 1}(2\ell +1)\left( \psi (\eigvs)\right) ^{\tau }A_{\ell
        }^{1}<\infty \quad \mbox{and}\quad \sum_{\ell \geq 1}(2\ell +1)\left( \psi (%
        \eigvs)\right) ^{\tau }A_{\ell }^{2}<\infty
    \end{equation*}%
    respectively for $\tau :=\max \{\frac{2}{\beta }(1-\beta -\hurst),0\}$.
    Then, the solution to \eqref{eq:vfSDE.S2} is given by
    \begin{align}
        \sol(t,\PT{x})& =\sum_{\ell =1}^{\infty }\sum_{m=-\ell }^{\ell }\left(
        \int_{0}^{t}\left( t-s\right) ^{\beta -1}E_{\beta ,\beta }(-\left(
        t-s\right) ^{\beta }\psi (\eigvs))\IntBa(s)\dsh(\PT{x})\right.  \notag \\
        & \left. \hspace{2.7cm}+\int_{0}^{t}\left( t-s\right) ^{\beta -1}E_{\beta
            ,\beta }(-\left( t-s\right) ^{\beta }\psi (\eigvs))\IntBb(s)\csh(\PT{x}%
        )\right) ,  \label{eq:sol.vfSDE.S2}
    \end{align}%
    where the above expansion is convergent in $\vLppsph{2}{2}$.
\end{theorem}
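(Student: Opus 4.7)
The plan is to reduce the vector SPDE to a countable family of uncoupled scalar fractional stochastic ODEs by projecting onto the vector spherical harmonic basis, to solve each scalar equation via the Mittag-Leffler resolvent, and then to verify convergence of the resulting series in $\vLppsph{2}{2}$ by using Parseval together with an $L_{2}$-estimate of stochastic integrals against fractional Brownian motion.

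\textbf{Step 1: Mode decomposition.} I would look for the solution in the expansion $\sol(t,\PT{x}) = \sum_{\ell\geq 1}\sum_{m=-\ell}^\ell\bigl(\hat{X}_{\ell m}(t)\dsh(\PT{x})+\tilde{X}_{\ell m}(t)\csh(\PT{x})\bigr)$. By the construction of $\psi(-\LBo)$ in Subsection 4.1, both $\dsh$ and $\csh$ are eigenfunctions of the fractional diffusion operator with eigenvalue $\psi(\eigvs)$, and the driving noise $\vfBmsph$ is expanded in the same basis with independent fBm coefficients $\BMa(t),\BMb(t)$. Substituting the ansatz into \eqref{eq:vfSDE.S2} and matching coefficients (using the orthonormality of the vector spherical harmonics) yields, for each $(\ell,m)$, the decoupled scalar Caputo-type equations
\begin{equation*}
\mathcal{D}_t^{\beta}\hat{X}_{\ell m}(t)=-\psi(\eigvs)\hat{X}_{\ell m}(t)+\dot{\BMa}(t),\qquad \hat{X}_{\ell m}(0)=0,
\end{equation*}
and an analogous equation for $\tilde{X}_{\ell m}$ driven by $\BMb$.

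\textbf{Step 2: Mittag-Leffler resolvent.} Each scalar equation is the classical linear Caputo fractional evolution driven by noise. Applying the Laplace transform (see Appendix A of \cite{anh2001spectral}) or equivalently the Duhamel principle with the resolvent kernel $s\mapsto s^{\beta-1}E_{\beta,\beta}(-s^{\beta}\psi(\eigvs))$, I obtain
\begin{equation*}
\hat{X}_{\ell m}(t)=\int_{0}^{t}s^{\beta-1}E_{\beta,\beta}\bigl(-s^{\beta}\psi(\eigvs)\bigr)\,d\BMa(s),
\end{equation*}
where the integral is interpreted in the Riemann-Stieltjes sense as defined in Subsection~3.2. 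Inserting these coefficients back into the expansion reproduces the formula \eqref{eq:sol.vfSDE.S2}.

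\textbf{Step 3: Convergence in $\vLppsph{2}{2}$.} By \eqref{eq:rvf.L2} applied to the partial sums together with the Parseval identity \eqref{eq:vf.Parseval}, the claim reduces to
\begin{equation*}
\sum_{\ell\geq 1}\sum_{m=-\ell}^{\ell}\Bigl(\mathbb{E}\bigl|\hat{X}_{\ell m}(t)\bigr|^{2}+\mathbb{E}\bigl|\tilde{X}_{\ell m}(t)\bigr|^{2}\Bigr)<\infty.
\end{equation*}
For $f_{\ell}(s):=s^{\beta-1}E_{\beta,\beta}(-s^{\beta}\psi(\eigvs))$ I would use the fBm isometry
\begin{equation*}
\mathbb{E}\bigl|\hat{X}_{\ell m}(t)\bigr|^{2}=A_{\ell}^{1}\hurst(2\hurst-1)\int_{0}^{t}\!\!\int_{0}^{t}f_{\ell}(s)f_{\ell}(u)|s-u|^{2\hurst-2}\,ds\,du\qquad(\hurst>1/2),
\end{equation*}
with the It\^{o} isometry treating the boundary case $\hurst=1/2$. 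Combined with the uniform bound $|E_{\beta,\beta}(-x)|\leq C/(1+x)$ for $x\geq 0$ and the rescaling $s=\psi(\eigvs)^{-1/\beta}v$, this produces $\mathbb{E}|\hat{X}_{\ell m}(t)|^{2}\leq C(t)A_{\ell}^{1}\psi(\eigvs)^{\tau}$ with $\tau=\max\{2(1-\beta-\hurst)/\beta,0\}$, and analogously for $\tilde{X}_{\ell m}$. Since the inner sum has $2\ell+1$ terms, summability follows directly from the hypothesis on $A_{\ell}^{1},A_{\ell}^{2}$.

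\textbf{Main obstacle.} The delicate step is extracting the precise exponent $\tau$ in the two regimes. When $\beta+\hurst\geq 1$ the integrand $s^{2\beta-2}$ is already integrable near the origin uniformly in $\psi(\eigvs)$, and no spectral weight is needed ($\tau=0$). When $\beta+\hurst<1$, the algebraic singularity at $s=0$ combines with the long-range-dependence kernel $|s-u|^{2\hurst-2}$ to produce a borderline integral whose $\psi(\eigvs)$-dependence must be extracted by a careful scaling argument to give exactly the factor $\psi(\eigvs)^{\tau}$. Verifying the convergence in the $\vLppsph{2}{2}$ sense (and not merely modewise) is the technical core of the argument.
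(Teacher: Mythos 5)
Your proposal is correct and follows the same overall strategy as the paper: reduce to uncoupled scalar modes, solve each with the Mittag--Leffler resolvent $t^{\beta-1}E_{\beta,\beta}(-\psi(\eigvs)t^{\beta})$, and prove $\vLppsph{2}{2}$-convergence via Parseval plus a second-moment bound on each stochastic convolution, extracting the spectral factor $\psi(\eigvs)^{2(1-\beta-\hurst)/\beta}$ by the same substitution $u=s\,\psi(\eigvs)^{1/\beta}$. The two points where you genuinely deviate are minor. First, you \emph{derive} the coefficient formula by projecting the equation and applying Duhamel, whereas the paper \emph{verifies} the stated candidate directly, using that $G_{\lambda}(t)=t^{\beta-1}E_{\beta,\beta}(-\lambda t^{\beta})$ solves $\frac{\mathrm{d}^{\beta}f}{\mathrm{d}t^{\beta}}+\lambda f=\delta$ and the convolution property of the Dirac delta; the mathematical content is the same. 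Second, for the moment bound the paper invokes the M\'emin--Mishura--Valkeila inequality $\mathbb{E}\bigl|\int_{0}^{t}g\,\mathrm{d}B^{\hurst}\bigr|^{2}\leq C_{\hurst}\bigl(\int_{0}^{t}|g|^{1/\hurst}\,\mathrm{d}s\bigr)^{2\hurst}$ together with Djrbashian's result that $G_{1}\in L_{1/\hurst}(\mathbb{R}_{+})$, while you use the exact covariance identity with kernel $\hurst(2\hurst-1)|s-u|^{2\hurst-2}$ plus the bound $|E_{\beta,\beta}(-x)|\leq C/(1+x)$. Your route still requires finiteness of the limiting double integral $\int_{0}^{\infty}\int_{0}^{\infty}G_{1}(v)G_{1}(w)|v-w|^{2\hurst-2}\,\mathrm{d}v\,\mathrm{d}w$, which is exactly what the combination of the M\'emin--Mishura--Valkeila bound and $G_{1}\in L_{1/\hurst}$ delivers, so in practice you would end up citing the same two facts; conversely, your exact identity is what the paper itself uses later for the covariance function in Proposition 4.8. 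Be aware that both arguments rely on integrability of $v^{\beta-1}w^{\beta-1}|v-w|^{2\hurst-2}$ near the origin, i.e.\ effectively on $\beta+\hurst>1$ (equivalently $G_{1}\in L_{1/\hurst}$); you correctly flag the regime $\beta+\hurst<1$ as the delicate case, and the paper's citation of Djrbashian carries the same implicit restriction, so this is a shared caveat rather than a gap specific to your argument.
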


\begin{proof}
    Let $\delta (t)$ be the Dirac delta function and $\lambda \in \R$. By \cite[%
    Chapter~5]{Podlubny1999}, for $\beta \in (0,1]$, the solution to the
    fractional equation
    \begin{equation*}
        \frac{\mathrm{d}^{\beta }f(t)}{\mathrm{d}{t}^{\beta }}+\lambda f(t)=\delta
        (t)
    \end{equation*}%
    is
    \begin{equation}
        G_{\lambda }(t)=t^{\beta -1}E_{\beta ,\beta }(-\lambda t^{\beta }),\quad
        t\geq 0,  \label{eq:G.lambda}
    \end{equation}%
    where $E_{\beta ,\beta }(\cdot )$ is the generalized Mittag-Leffler function
    given by \eqref{eq:Mittag.Leffler.fun}. Using this result and taking the
    fractional-in-time derivatives of both sides of Eq.~\eqref{eq:sol.vfSDE.S2}
    one obtains
    \begin{align*}
        \pdt{t}\sol(t,\PT{x})& =\pdt{t}\sum_{\ell =1}^{\infty }\sum_{m=-\ell }^{\ell
        }\left( \int_{0}^{t}G_{\psi (\eigvs)}(t-s)\IntBa(s)\dsh(\PT{x})\right. \\
        & \left. \hspace{1.7cm}+\int_{0}^{t}G_{\psi (\eigvs)}(t-s)\IntBb(s)\csh(%
        \PT{x})\right) \\
        & =\sum_{\ell =1}^{\infty }\sum_{m=-\ell }^{\ell }\left( \int_{0}^{t}\left(
        -\psi (\eigvs)G_{\psi (\eigvs)}(t-s)+\delta (t-s)\right) \IntBa(s)\dsh(\PT{x}%
        )\right. \\
        & \left. \hspace{1.7cm}+\int_{0}^{t}\left( -\psi (\eigvs)G_{\psi (\eigvs%
            )}(t-s)+\delta (t-s)\right) \IntBb(s)\csh(\PT{x})\right) \\
        & =\sum_{\ell =1}^{\infty }\sum_{m=-\ell }^{\ell }\left\{ \left(
        \int_{0}^{t}\left( -\psi (\eigvs)G_{\psi (\eigvs)}(t-s)\right) \IntBa(s)+%
        \IntBa(t)\right) \dsh(\PT{x})\right. \\
        & \left. \hspace{1.7cm}+\left( \int_{0}^{t}\left( -\psi (\eigvs)G_{\psi (%
            \eigvs)}(t-s)\right) \IntBb(s)+\IntBb(t)\right) \csh(\PT{x})\right\} ,
    \end{align*}%
    where the last equality uses the the property of convolution of the Dirac
    delta function. On the other hand, the right-hand side of \eqref{eq:vfSDE.S2}
    is
    \begin{align*}
        & -\psi (-\LBo)\sol(t,\PT{x})+\IntD{\vfBmsph(t,\PT{x})} \\
        & \qquad =-\sum_{\ell =1}^{\infty }\sum_{m=-\ell }^{\ell }\left(
        \int_{0}^{t}G_{\psi (\eigvs)}(t-s)\IntBa(s)\psi (-\LBo)\dsh(\PT{x})\right. \\
        & \qquad \left. \hspace{2.7cm}+\int_{0}^{t}G_{\psi (\eigvs)}(t-s)\IntBb%
        (s)\psi (-\LBo)\csh(\PT{x})\right) +\IntD{\vfBmsph(t,\PT{x})} \\
        & \qquad =\sum_{\ell =1}^{\infty }\sum_{m=-\ell }^{\ell }\left\{ \left(
        \int_{0}^{t}\left( -\psi (\eigvs)G_{\psi (\eigvs)}(t-s)\right) \IntBa(s)+%
        \IntBa(t)\right) \dsh(\PT{x})\right. \\
        & \qquad \left. \hspace{2.7cm}+\left( \int_{0}^{t}\left( -\psi (\eigvs%
        )G_{\psi (\eigvs)}(t-s)\right) \IntBb(s)+\IntBb(t)\right) \csh(\PT{x}%
        )\right\} ,
    \end{align*}%
    where the last equality uses the expansion of $\vfBmsph(t,\PT{x})$ in %
    \eqref{eq:fBm}. Thus, the random field defined by \eqref{eq:sol.vfSDE.S2}
    satisfies Eq. \eqref{eq:vfSDE.S2}.

    We now prove that the expansion \eqref{eq:sol.vfSDE.S2} converges in $%
    \vLppsph{2}{2}$. By \eqref{eq:rvf.L2} and \eqref{eq:vf.Parseval} and \cite[%
    Theorem~1.1]{MeMiVa2001}, there exist constants $C_{\hurst}^{1}$ and $C_{%
        \hurst}^{2}$ such that
    \begin{align}
        \norm{\sol(t,\PT{x})}{\vLppsph{2}{2}}^{2}& =\sum_{\ell =1}^{\infty
        }\sum_{m=-\ell }^{\ell }\left\{ \mathbb{E}\left[ \left\vert
        \int_{0}^{t}\left( t-s\right) ^{\beta -1}E_{\beta ,\beta }(-\left(
        t-s\right) ^{\beta }\psi (\eigvs))\IntBa(s)\dsh(\PT{x})\right\vert ^{2}%
        \right] \right.  \notag \\
        & \hspace{1.cm}\left. +\mathbb{E}\left[ \left\vert \int_{0}^{t}\left(
        t-s\right) ^{\beta -1}E_{\beta ,\beta }(-\left( t-s\right) ^{\beta }\psi (%
        \eigvs))\IntBb(s)\dsh(\PT{x})\right\vert ^{2}\right] \right\}  \notag \\
        &\hspace{-1.5cm} \leq \sum_{\ell =1}^{\infty }\sum_{m=-\ell }^{\ell }\left\{ C_{\hurst%
        }^{1}A_{\ell }^{1}\left( \int_{0}^{t}\bigl|\left( t-s\right) ^{\beta
            -1}E_{\beta ,\beta }(-\left( t-s\right) ^{\beta }\psi (\eigvs))\bigr|^{\frac{%
                1}{\hurst}}\IntD{s}\right) ^{2\hurst}\right.  \notag \\
        & \hspace{-1.5cm}\quad\quad\left. +C_{\hurst}^{2}A_{\ell }^{2}\left( \int_{0}^{t}\bigl|%
        \left( t-s\right) ^{\beta -1}E_{\beta ,\beta }(-\left( t-s\right) ^{\beta
        }\psi (\eigvs))\bigr|^{\frac{1}{\hurst}}\IntD{s}\right) ^{2\hurst}\right\}
        \notag \\
        & \hspace{-1.5cm}=\sum_{\ell =1}^{\infty }\sum_{m=-\ell }^{\ell }\left\{ C_{\hurst%
        }^{1}A_{\ell }^{1}\left( \int_{0}^{t}\bigl|s^{\beta -1}E_{\beta ,\beta
        }(-s^{\beta }\psi (\eigvs))\bigr|^{\frac{1}{\hurst}}\IntD{s}\right) ^{2\hurst%
        }\right.  \notag \\
        &\hspace{-1.5cm}\quad\quad\left. +C_{\hurst}^{2}A_{\ell }^{2}\left( \int_{0}^{t}\bigl|s^{\beta
            -1}E_{\beta ,\beta }(-s^{\beta }\psi (\eigvs))\bigr|^{\frac{1}{\hurst}}%
        \IntD{s}\right) ^{2\hurst}\right\}  \notag \\
        & \hspace{-1.5cm}=C_{\hurst}^{1}\sum_{\ell =1}^{\infty }(2\ell +1)A_{\ell }^{1}\left( \psi (%
        \eigvs)\right) ^{\frac{2}{\beta }(1-\beta -\hurst)}\left( \int_{0}^{t\left(
            \psi (\eigvs)\right) ^{\frac{1}{\beta }}}|u^{\beta -1}E_{\beta ,\beta
        }(-u^{\beta })|^{\frac{1}{\hurst}}\IntD{u}\right) ^{2\hurst}  \notag \\
        & \hspace{-1.5cm}\quad\quad +C_{\hurst}^{2}\sum_{\ell =1}^{\infty }(2\ell +1)A_{\ell }^{2}\left(
        \psi (\eigvs)\right) ^{\frac{2}{\beta }(1-\beta -\hurst)}\left(
        \int_{0}^{t\left( \psi (\eigvs)\right) ^{\frac{1}{\beta }}}|u^{\beta
            -1}E_{\beta ,\beta }(-u^{\beta })|^{\frac{1}{\hurst}}\IntD{u}\right) ^{2%
            \hurst}  \notag \\
        & \hspace{-1.5cm}\leq C_{\hurst}^{1}\sum_{\ell =1}^{\infty }(2\ell +1)A_{\ell }^{1}\left(
        \psi (\eigvs)\right) ^{\frac{2}{\beta }(1-\beta -\hurst)}\left(
        \int_{0}^{\infty }|G_{1}(u)|^{\frac{1}{\hurst}}\IntD{u}\right) ^{2\hurst}
        \notag \\
        &\hspace{-1.5cm}\quad\quad +C_{\hurst}^{2}\sum_{\ell =1}^{\infty }(2\ell +1)A_{\ell
        }^{2}\left( \psi (\eigvs)\right) ^{\frac{2}{\beta }(1-\beta -\hurst)}\left(
        \int_{0}^{\infty }|G_{1}(u)|^{\frac{1}{\hurst}}\IntD{u}\right) ^{2\hurst%
        }<\infty ,  \label{proofTh4.1}
    \end{align}%
    where the last inequality uses the property that for $\hurst\in \lbrack
    1/2,1)$ the function $G_{1}(t)$ defined by~\eqref{eq:G.lambda} is in $L_{{1}/%
        {\hurst}}(\Rplus)$ (see \cite[Theorem~1.3-3]{Djrbashian1993}). This means
    that the expansion in \eqref{eq:sol.vfSDE.S2} converges in $\vLppsph{2}{2}$
    and $\sol(t,\PT{x})$ defined by \eqref{eq:sol.vfSDE.S2} is the solution to
    the vector-valued fractional SPDE \eqref{eq:vfSDE.S2}, which completes the
    proof.\hfill {}
\end{proof}

Let us consider the approximation $\sol_{L}(t,\PT{x})$ of truncation degree $%
L\in \mathbb{N}$ to the solution $\sol(t,\PT{x})$ in~\eqref{eq:sol.vfSDE.S2}
defined by
\begin{align}
    \sol_{L}(t,\PT{x})& =\sum_{\ell =1}^{L}\sum_{m=-\ell }^{\ell }\left(
    \int_{0}^{t}\left( t-s\right) ^{\beta -1}E_{\beta ,\beta }(-\left(
    t-s\right) ^{\beta }\psi (\eigvs))\IntBa(s)\dsh(\PT{x})\right.  \notag \\
    & \left. \hspace{2.cm}+\int_{0}^{t}\left( t-s\right) ^{\beta -1}E_{\beta
        ,\beta }(-\left( t-s\right) ^{\beta }\psi (\eigvs))\IntBb(s)\csh(\PT{x}%
    )\right) ,\quad t>0.  \notag
\end{align}%
The following result gives the convergence rate of the approximation $\sol%
_{L}(t,\PT{x})$ to the solution $\sol(t,\PT{x})$. It demonstrates how the
convergence rate is determined by the magnitudes of variances $A_{\ell }^{1}$
and and $A_{\ell }^{2}$. {We also give its asymptotic for an important
    specific scenario of the algebraic decay. }

\begin{corollary}
    \label{cor1} Let the conditions of Theorem~\ref{thm:sol.vfSDE.S2} be
    satisfied and $\sol(t,\PT{x})$ be the solution to the equation %
    \eqref{eq:vfSDE.S2}. Then,

    \begin{enumerate}
        \item[(i)] For $t>0$, the truncation error is bounded by
        \begin{equation*}
            \bigl\|\sol(t,\PT{x})-\sol_L(t,\PT{x})\bigr\|_{\vLppsph{2}{2}} \le
            C\left(\sum_{\ell
                =L+1}^{\infty}(2\ell+1)\left(A_{\ell}^{1}+A_{\ell}^{2}\right)\left(\psi(%
            \eigvs)\right)^{\tau}\right)^{1/2}.
        \end{equation*}

        \item[(ii)] For the magnitudes of variances that decay algebraically with
        order $\nu >2$, that is, there exist constants $C >0$ and $\ell_0\in \mathbb{%
            N}$ such that $\left(A_{\ell}^{1}+A_{\ell}^{2}\right)\left(\psi(\eigvs%
        )\right)^{\tau}\le C\cdot \ell^{-\nu}$ for all $\ell\ge \ell_0$, it holds
        \begin{equation*}
            \bigl\|\sol(t,\PT{x})-\sol_L(t,\PT{x})\bigr\|_{\vLppsph{2}{2}} \le C {L}^{-%
                \frac{\nu-2}{2}};
        \end{equation*}

        \item[(iii)] The estimates in (i) and (ii) are also valid uniformly over $%
        \PT{x}\in \sph{2}$ for the root mean squared truncation error
        \begin{align*}
            \mathbf{RMSE}(\sol(t,\PT{x})-\sol_L(t,\PT{x})) &=\mathbf{Var}^{1/2}\left(\sol%
            (t,\PT{x}) - \sol_L(t,\PT{x})\right) \\
            &=\bigl\|\sol(t,\PT{x})-\sol_L(t,\PT{x})\bigr\|_{L_2(\Omega)}.
        \end{align*}

        \item[(iv)] Under conditions of the algebraic decay in (ii), for any $%
        \varepsilon>0$ it holds
        \begin{equation*}
            \mathbf{P}\Big(|\sol(t,\PT{x})-\sol_L(t,\PT{x})|\ge\varepsilon\Big)\le \frac{%
                C}{{L}^{\nu-2}\,\varepsilon^2}.
        \end{equation*}
    \end{enumerate}
\end{corollary}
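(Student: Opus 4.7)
The plan is to reuse, for each of the four parts, the same chain of bounds that appears in the proof of Theorem~\ref{thm:sol.vfSDE.S2}, but with the outer sum truncated to $\ell\ge L+1$; then the addition theorem for vector spherical harmonics upgrades the $\vLppsph{2}{2}$-estimate to a pointwise RMSE estimate, and Markov's inequality delivers the probabilistic bound.

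For (i), observe that $\sol(t,\PT{x})-\sol_{L}(t,\PT{x})$ is precisely the tail $\sum_{\ell>L}$ of the expansion \eqref{eq:sol.vfSDE.S2}. Applying \eqref{eq:rvf.L2}, the Parseval identity \eqref{eq:vf.Parseval}, \cite[Theorem~1.1]{MeMiVa2001} and the change of variables $u=s(\psi(\eigvs))^{1/\beta}$ as in \eqref{proofTh4.1}, I would obtain
\[
\bigl\|\sol(t,\cdot)-\sol_{L}(t,\cdot)\bigr\|_{\vLppsph{2}{2}}^{2}\le C\sum_{\ell=L+1}^{\infty}(2\ell+1)\bigl(A_{\ell}^{1}+A_{\ell}^{2}\bigr)\bigl(\psi(\eigvs)\bigr)^{\tau},
\]
which is (i) after taking square roots. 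For (ii), the algebraic-decay hypothesis bounds the $\ell$-th summand by $C\ell^{1-\nu}$, and an integral-test comparison gives $\sum_{\ell>L}\ell^{1-\nu}\le CL^{2-\nu}$ whenever $\nu>2$; a square root then delivers the rate $L^{-(\nu-2)/2}$.

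For (iii), I would compute the pointwise variance by exploiting the independence of the fractional Brownian motions $\BMa,\BMb$ across different $(\ell,m)$, which gives
\[
\bigl\|\sol(t,\PT{x})-\sol_{L}(t,\PT{x})\bigr\|_{L_{2}(\Omega)}^{2}=\sum_{\ell=L+1}^{\infty}\sum_{m=-\ell}^{\ell}\Bigl(V_{\ell}^{1}\,|\dsh(\PT{x})|^{2}+V_{\ell}^{2}\,|\csh(\PT{x})|^{2}\Bigr),
\]
where $V_{\ell}^{i}$ denotes the variance of the Mittag-Leffler-weighted stochastic integral for $i=1,2$. By the addition theorem \eqref{eq:add.thm.vsh} the $m$-sums are $\sum_{m}|\dsh(\PT{x})|^{2}=\mathrm{tr}\,\dlg(\PT{x},\PT{x})$ and $\sum_{m}|\csh(\PT{x})|^{2}=\mathrm{tr}\,\clg(\PT{x},\PT{x})$; since the Legendre tensor kernels on the diagonal depend only on $\PT{x}\cdot\PT{x}=1$, these traces are constant on $\sph{2}$, and integrating each of $|\dsh|^{2},|\csh|^{2}$ against $\sphm$ (which equals $1$ by orthonormality) and dividing by $\mathrm{vol}(\sph{2})=4\pi$ shows they are bounded by $C(2\ell+1)$ uniformly in $\PT{x}$. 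This reduces the RMSE bound to exactly the same series as in (i), so (i) and (ii) transfer verbatim. Part (iv) is then immediate from Markov's inequality applied to $|\sol(t,\PT{x})-\sol_{L}(t,\PT{x})|^{2}$, namely $\mathbf{P}(|\sol-\sol_{L}|\ge\varepsilon)\le\varepsilon^{-2}\|\sol-\sol_{L}\|_{L_{2}(\Omega)}^{2}$, combined with (iii) under the algebraic-decay hypothesis.

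The main technical obstacle I expect is the uniform-in-$\PT{x}$ bound in step (iii), specifically establishing the estimate $\mathrm{tr}\,\dlg(\PT{x},\PT{x})+\mathrm{tr}\,\clg(\PT{x},\PT{x})\le C(2\ell+1)$ via the diagonal values of the Legendre tensor kernels; once this is in hand, the rest is bookkeeping on top of the bound chain already exhibited in the proof of Theorem~\ref{thm:sol.vfSDE.S2}.
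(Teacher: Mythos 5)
Your proposal is correct and follows essentially the same route as the paper: parts (i) and (ii) repeat the bound chain of \eqref{proofTh4.1} on the tail $\ell>L$ together with the integral comparison $\sum_{\ell>L}\ell^{1-\nu}\le CL^{2-\nu}$, and (iv) is Chebyshev's inequality applied to the $L_{2}(\Omega)$ bound. The one place you go beyond the paper is (iii): the paper simply remarks that the computation ``does not depend on $\PT{x}$,'' whereas you make the underlying mechanism explicit via the addition theorem, showing $\sum_{m}|\dsh(\PT{x})|^{2}=\mathrm{tr}\,\dlg(\PT{x},\PT{x})=(2\ell+1)/(4\pi)$ uniformly in $\PT{x}$ (and likewise for $\csh$) --- a welcome extra level of rigor rather than a different approach.
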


\begin{proof}
    Noting that
    \begin{align}
        \sol(t,\PT{x})-\sol_{L}(t,\PT{x})& =\sum_{\ell =L+1}^{\infty }\sum_{m=-\ell
        }^{\ell }\left( \int_{0}^{t}\left( t-s\right) ^{\beta -1}E_{\beta ,\beta
        }(-\left( t-s\right) ^{\beta }\psi (\eigvs))\IntBa(s)\dsh(\PT{x})\right.
        \notag \\
        & \left. +\int_{0}^{t}\left( t-s\right) ^{\beta -1}E_{\beta
            ,\beta }(-\left( t-s\right) ^{\beta }\psi (\eigvs))\IntBb(s)\csh(\PT{x}%
        )\right) ,\quad t>0,  \notag
    \end{align}%
    the proof of statement (i) is analogous to the derivations \eqref{proofTh4.1}
    in the proof of Theorem~\ref{thm:sol.vfSDE.S2} with
    \begin{equation*}
        C=\max \left( \sqrt{C_{\hurst}^{1}},\sqrt{C_{\hurst}^{2}}\right) \left(
        \int_{0}^{\infty }|G_{1}(u)|^{\frac{1}{\hurst}}\IntD{u}\right) ^{\hurst%
        }<\infty .
    \end{equation*}

    The statement (ii) follows from (i) and the estimate
    \begin{equation*}
        \sum_{\ell
            =L+1}^{\infty}(2\ell+1)\left(A_{\ell}^{1}+A_{\ell}^{2}\right)\left(\psi (%
        \eigvs)\right)^{\tau}\le C\sum_{l=L+1}^{\infty}l^{-(\nu-1)}\le C
        L^{-(\nu-2)}.
    \end{equation*}

    Note that the proofs of \eqref{proofTh4.1} in Theorem~\ref{thm:sol.vfSDE.S2}
    and statements (i) and (ii) of this Corollary do not depend on $\PT{x}$.
    Therefore, they also uniformly hold in the $L_{2}(\Omega )$-norm over the
    sphere $\sph{2}$.

    Applying the estimate in (ii) and Chebyshev's inequality, one gets the upper
    bound in (iv).\hfill {}
\end{proof}

In the following, to introduce the initial conditions for the stochastic
Cauchy problem (\ref{eq:sCau.pb}) we will use a centred random tangent field
$\rvf_{0}(\PT{x})$. Note that, in the classical scalar and vector cases
initial conditions are usually given by stationary random fields. However,
the class of stationary tangent random fields is degenerate and consists of
almost sure constant fields. To provide more comprehensive initial
conditions one needs a non-degenerate equivalent of the stationarity
concept for tangent random fields. We introduce it via properties of the
Fourier coefficients of $\rvf_{0}(\PT{x})$, which coincides with
corresponding properties of spectral coefficients of scalar and vector
stationary fields.

Namely, let us consider a series representation of $\rvf_{0}(\PT{x})$ in
terms of the vector spherical harmonics $\dsh$ and $\csh$. The Fourier
coefficients of $\rvf_{0}(\PT{x})$ in this representation will be denoted by
$\dfco{(\rvf_0)}$ and $\cfco{(\rvf_0)}$ respectively and can be computed as
\begin{equation*}
	\dfco{(\rvf_0)}=\InnerL{\rvf_{0}(\PT{x}),\dsh}\quad \mbox{and} \quad %
	\cfco{(\rvf_0)}=\InnerL{\rvf_{0}(\PT{x}),\csh}.
\end{equation*}
We assume that these coefficients are uncorrelated, i.e. for $\ell,
\ell^{\prime}\geq 1$, $m=-\ell,\dots,\ell$, $m^{\prime}=-\ell^{\prime},%
\dots,\ell^{\prime}$ it holds that
\begin{equation}  \label{norm1}
	\expect{\dfco{(\rvf_0)}\cfco[\ell' m']{(\rvf_0)}}=0,
\end{equation}
\begin{equation}  \label{norm2}
	\expect{\dfco{(\rvf_0)}\dfco[\ell' m']{(\rvf_0)}}=\delta_{\ell
		\ell^{\prime}}\delta_{mm^{\prime}}\widehat{\sigma}_{\ell}^{2} \quad %
	\mbox{and} \quad \expect{\cfco{(\rvf_0)}\cfco[\ell' m']{(\rvf_0)}} =
	\delta_{\ell \ell^{\prime}}\delta_{mm^{\prime}}\widetilde{\sigma}_{\ell}^{2},
\end{equation}
where $\delta_{\ell \ell^{\prime}}$ is the Kronecker delta function.

The coefficients $\{\widehat{\sigma}_{\ell}^{2}\setsep\ell \geq 1\}$ and $\{%
\widetilde{\sigma}_{\ell}^{2}\setsep\ell \geq 1\}$ are divergence-free and
curl-free power spectra of $\rvf_{0}(\PT{x})$ satisfying
\begin{equation*}
	\sum_{\ell \geq 1}(2\ell +1)\widehat{\sigma}_{\ell}^{2}<\infty \quad %
	\mbox{and} \quad \sum_{\ell\geq1}(2\ell+1)\widetilde{\sigma}%
	_{\ell}^{2}<\infty
\end{equation*}
respectively.

\begin{theorem}
	\label{thm:posol} Let $\beta \in (0,1]$ and $\rvf_0(\xi,\PT{x})$ be a random
	tangent field in $\vLppsph{2}{2}$ satisfying \eqref{norm1} and \eqref{norm2}%
	. Then,  for $t\geq 0$ and $\PT{x}\in \sph{2}$ the solution of the fractional stochastic Cauchy problem %
	\eqref{eq:sCau.pb} with the initial condition $\posol(0,\PT{x})=\rvf_{0}(%
	\PT{x})$ is given in terms of vector spherical harmonics as
	\begin{equation}
		\posol(t,\PT{x}) =\sum_{\ell=1}^{\infty}\sum_{m=-\ell}^{\ell}E_{\beta,1}%
		\bigl(-t^{\beta}\psi(\eigvs)\bigr)\left(\dfco{(\rvf_0)}\dsh(\PT{x})+%
		\cfco{(\rvf_0)}\csh(\PT{x})\right),
		\label{eq:polsol1}
	\end{equation}
	which is convergent in $\vLppsph{2}{2}$.
\end{theorem}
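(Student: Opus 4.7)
The plan is to verify directly that the series on the right-hand side of \eqref{eq:polsol1} solves the fractional stochastic Cauchy problem \eqref{eq:sCau.pb} with initial datum $\rvf_{0}$, and then to establish its convergence in $\vLppsph{2}{2}$. The approach rests on three ingredients. First, for each $\lambda \ge 0$ the function $t \mapsto E_{\beta,1}(-\lambda t^{\beta})$ is the unique solution of the scalar fractional ODE $\mathcal{D}_{t}^{\beta} f(t) + \lambda f(t) = 0$ with $f(0)=1$ (the classical Mittag--Leffler identity, cf.\ Chapter~4 of \cite{Podlubny1999}), which is the homogeneous counterpart of the Green function $G_{\lambda}$ used in the proof of Theorem~\ref{thm:sol.vfSDE.S2}. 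Second, by the construction in Subsection~4.1, the operator $\psi(-\LBo)$ acts on each vector spherical harmonic by multiplication by $\psi(\eigvs)$. Third, for $\beta \in (0,1]$ the uniform bound $0 \le E_{\beta,1}(-s) \le 1$ holds for all $s \ge 0$; this is a standard consequence of the complete monotonicity of $E_{\beta,1}$.

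I would then proceed in three steps. First, apply $\mathcal{D}_{t}^{\beta}$ and $\psi(-\LBo)$ term by term to the proposed series: using the two facts above, each $(\ell,m)$-term of $\mathcal{D}_{t}^{\beta} \posol + \psi(-\LBo) \posol$ vanishes identically, so the series formally satisfies \eqref{eq:sCau.pb}. Second, set $t=0$ and use $E_{\beta,1}(0) = 1/\Gamma(1) = 1$ together with the vector spherical harmonic expansion of $\rvf_{0} \in \tLp{2}{2}$ $\Pas$ to recover the initial condition $\rvf_{0}(\PT{x})$. Third, for the convergence claim, combine \eqref{eq:rvf.L2}, Parseval's identity \eqref{eq:vf.Parseval}, the uncorrelatedness relations \eqref{norm1}--\eqref{norm2} and the pointwise bound $|E_{\beta,1}(-t^{\beta}\psi(\eigvs))| \le 1$ to obtain
\begin{equation*}
\bigl\|\posol(t,\cdot)\bigr\|_{\vLppsph{2}{2}}^{2}
= \sum_{\ell=1}^{\infty} \sum_{m=-\ell}^{\ell} \bigl|E_{\beta,1}(-t^{\beta}\psi(\eigvs))\bigr|^{2} \bigl(\widehat{\sigma}_{\ell}^{2} + \widetilde{\sigma}_{\ell}^{2}\bigr)
\le \sum_{\ell=1}^{\infty} (2\ell+1) \bigl(\widehat{\sigma}_{\ell}^{2} + \widetilde{\sigma}_{\ell}^{2}\bigr) < \infty,
\end{equation*}
by the power spectrum summability hypothesis; the same estimate applied to tails shows that the partial sums form a Cauchy sequence in $\vLppsph{2}{2}$.

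The main obstacle is not any estimate but the justification of termwise differentiation: one must verify that the non-local operator $\mathcal{D}_{t}^{\beta}$ can be exchanged with the infinite sum. A safe route is to first check the identity for the partial sums $\posol_{L}$ of the proposed expansion (which is immediate since each such sum is finite and $\mathcal{D}_{t}^{\beta}$ is linear) and then pass to the limit as $L \to \infty$, using the uniform bound on $|E_{\beta,1}|$ together with the summability established above, so that dominated convergence applies in $\vLppsph{2}{2}$. Exchanging $\psi(-\LBo)$ with the sum is less delicate because, by its definition in Subsection~4.1 through the $C_{0}$-semigroup $\{\mathrm{P}_{t}\}_{t \ge 0}$, this operator commutes with $L_{2}$-convergent expansions in its eigenfunctions $\dsh, \csh$.
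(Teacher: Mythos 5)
Your proposal is correct and follows essentially the same route as the paper: verify the equation term by term using the Mittag--Leffler identity $\mathcal{D}_{t}^{\beta}E_{\beta,1}(-\lambda t^{\beta})=-\lambda E_{\beta,1}(-\lambda t^{\beta})$ and the eigenfunction action of $\psi(-\LBo)$, then establish $\vLppsph{2}{2}$-convergence via Parseval's identity and the bound $0<E_{\beta,1}(-z)\le 1$. Your additional care about the initial condition at $t=0$ and the interchange of $\mathcal{D}_{t}^{\beta}$ with the infinite sum (via partial sums and the dominated tail estimate) only makes explicit what the paper leaves implicit.
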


\begin{remark}
	Theorem~\ref{thm:posol} is an extension of \cite[Theorem~1]{DOLeOr2016} to
	the vector case of fractional stochastic Cauchy problem on $\sph{2}$.
\end{remark}

\begin{remark}
	An equation of the form \eqref{eq:sCau.pb} on $\mathbb{R}^{n}$ was
	investigated in \cite{anh2001spectral}. Eq.~\eqref{eq:vfSDE.S2} is an
	(infinite-dimensional) spatiotemporal version of the 2-term fractional
	differential equation investigated in \cite{anh2002dynamic}. There, the
	roles of the fractional derivatives were studied more generally for
	multi-term time-fractional differential equations. Especially, for the
	2-term equation such as \eqref{eq:vfSDE.S2}, the order $\beta $ signifies
	the extent of intermittency in its solution. Apart from the fractional
	diffusion operator and the long-range dependence depicted by the Hurst index
	$H$ in the range $1/2<H<1$ in Eq.~\eqref{eq:vfSDE.S2}, this intermittency is
	another important feature which characterizes the temporal evolution of the
	solution.
\end{remark}

\begin{proof}[Proof of Theorem~\ref{thm:posol}]
	The proof is similar to Theorem~\ref{thm:sol.vfSDE.S2}. By \cite%
	{Djrbashian1993}, for $\beta \in (0,1]$, the solution to the fractional
	equation
	\begin{equation*}
		\frac{\mathrm{d}^{\beta}f(t)}{\mathrm{d}{t}^{\beta}}+\lambda f(t)=0
	\end{equation*}
	is $E_{\beta,1}(-\lambda t^{\beta})$, $t\geq 0$. By this result, taking the
	fractional-in-time derivatives on both sides of~\eqref{eq:polsol1} one
	obtains
	\begin{align*}
		&\pdt{t}\posol(t,\PT{x})  =\sum_{\ell
			=1}^{\infty}\sum_{m=-\ell}^{\ell}\left(-\psi(\eigvs)E_{\beta,1}(-t^{\beta}%
		\psi(\eigvs))\right)\left(\dfco{(\rvf_0)}\dsh(\PT{x})+\cfco{(\rvf_0)}\csh(%
		\PT{x})\right) \\
		&=\sum_{\ell =1}^{\infty}\sum_{m=-\ell}^{\ell}E_{\beta,1}(-t^{\beta}\psi(%
		\eigvs))\left(\dfco{(\rvf_0)}(-\psi(-\LBo))\dsh(\PT{x})+\cfco{(\rvf_0)}%
		(-\psi(-\LBo))\csh(\PT{x})\right) \\
		& =-\psi (-\LBo)\posol(t,\PT{x}),
	\end{align*}
	which verifies that $\posol(t,\PT{x})$ in \eqref{eq:polsol1} satisfies the
	equation \eqref{eq:sCau.pb}.

	By \eqref{eq:vf.Parseval} and \eqref{eq:rvf.L2},
	\begin{align}
		\norm{\posol(t,\PT{x})}{\vLppsph{2}{2}}^{2}& =\sum_{\ell
			=1}^{\infty}\sum_{m=-\ell}^{\ell}\bigl|E_{\beta,1}\bigl(-t^{\beta}\psi (%
		\eigvs)\bigr)\bigr|^{2}\expect{\left(\bigl|\dfco{(\rvf_0)}\bigr|^{2}
			+\bigl|\cfco{(\rvf_0)}\bigr|^{2}\right)}  \notag \\
		& \leq \expect{\sum_{\ell=1}^{\infty}\sum_{m=-\ell}^{\ell}\left(\bigl|%
			\dfco{(\rvf_0)}\bigr|^{2}+\bigl|\cfco{(\rvf_0)}\bigr|^{2}\right)} = %
		\norm{\rvf_0}{\vLppsph{2}{2}}^{2}<\infty,  \label{norm_u}
	\end{align}
	where it was used that $0<E_{\beta,1}(-z)\le 1$ for $z\geq 0$ (see \cite[%
	Theorem~4]{Simon2014}). This shows that the expansion of $\posol(t,\PT{x})$
	in \eqref{eq:polsol1} converges in $\vLppsph{2}{2}$. Thus, \eqref{eq:polsol1}
	is the solution to \eqref{eq:sCau.pb}.\hfill {}
\end{proof}

Let us consider the approximation $\posol_L(t,\PT{x})$ of truncation degree $%
L\in\mathbb{N}$ to the solution $\posol(t,\PT{x})$ in~\eqref{eq:polsol1}
defined by
\begin{equation*}
	\posol_L(t,\PT{x})=\sum_{\ell=1}^{L}\sum_{m=-\ell}^{\ell}E_{\beta,1}\bigl(%
	-t^{\beta}\psi(\eigvs)\bigr)\left(\dfco{(\rvf_0)}\dsh(\PT{x})+\cfco{(\rvf_0)}%
	\csh(\PT{x})\right), \;\; t\geq 0,\PT{x}\in \sph{2}.
\end{equation*}
\newline
\indent The following result gives the convergence rate of the approximation
$\posol_L(t,\PT{x})$ to the solution $\posol(t,\PT{x})$ in terms of the
divergence-free and curl-free power spectra.

\begin{corollary}
	Let the conditions of Theorem~\ref{thm:posol} be satisfied and $\posol(t,%
	\PT{x})$ be the solution to the equation \eqref{eq:polsol1}. Then, we have
	the following estimates.

	\begin{enumerate}
		\item[(i)] For $t>0$, the truncation error is bounded by
		\begin{equation*}
			\|\posol(t,\PT{x})-\posol_L(t,\PT{x})\|_{\vLppsph{2}{2}} \le
			C\left(\sum_{\ell =L+1}^{\infty}(2\ell +1)\left(\widehat{\sigma}_{\ell}^{2}+%
			\widetilde{\sigma}_{\ell}^{2}\right)\right)^{1/2}.
		\end{equation*}

		\item[(ii)] For the magnitudes of variances that decay algebraically with
		order $\nu >2$, that is, there exist constants $C >0$ and $\ell_0\in \mathbb{%
			N}$ such that $\left(\widehat{\sigma}_{\ell}^{2}+\widetilde{\sigma}%
		_{\ell}^{2}\right)\le C\cdot \ell^{-\nu}$ for all $\ell\ge \ell_0$, it holds that
		\begin{equation*}
			\|\posol(t,\PT{x})-\posol_L(t,\PT{x})\|_{\vLppsph{2}{2}} \le C {L}^{-\frac{%
					\nu-2}{2}}.
		\end{equation*}

		\item[(iii)] The estimates in (i) and (ii) are also valid uniformly over $%
		\PT{x}\in \sph{2}$ for the root mean square truncation error
		\begin{align*}
			\mathbf{RMSE}(\posol(t,\PT{x})-\posol_L(t,\PT{x})) &=\mathbf{Var}^{1/2}\left(%
			\posol(t,\PT{x})-\posol_L(t,\PT{x})\right) \\
			&=\|\posol(t,\PT{x})-\posol_L(t,\PT{x})\|_{L_2(\Omega)}.
		\end{align*}

		\item[(iv)] Under the conditions of the algebraic decay in (ii), for any $%
		\varepsilon>0$, it holds that
		\begin{equation*}
			\mathbf{P}\Big(|\posol(t,\PT{x})-\posol_L(t,\PT{x})|\ge\varepsilon\Big)\le
			\frac{C}{{L}^{\nu-2}\,\varepsilon^2}.
		\end{equation*}
	\end{enumerate}
\end{corollary}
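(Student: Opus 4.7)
The plan is to follow the template of Corollary~\ref{cor1}, since the form of $\posol(t,\PT{x})$ in \eqref{eq:polsol1} is structurally analogous to that of $\sol(t,\PT{x})$, with the Mittag-Leffler factor $E_{\beta,1}(-t^\beta\psi(\eigvs))$ playing the role of the time-integral that appeared there. First I would write the truncation error as the tail of the Karhunen-Lo\`eve expansion,
\begin{equation*}
\posol(t,\PT{x})-\posol_L(t,\PT{x}) = \sum_{\ell=L+1}^\infty \sum_{m=-\ell}^\ell E_{\beta,1}\bigl(-t^\beta \psi(\eigvs)\bigr)\left(\dfco{(\rvf_0)}\dsh(\PT{x}) + \cfco{(\rvf_0)}\csh(\PT{x})\right),
\end{equation*}
and then compute its $\vLppsph{2}{2}$-norm using \eqref{eq:rvf.L2}, Parseval's identity \eqref{eq:vf.Parseval}, and the uncorrelatedness conditions \eqref{norm1}--\eqref{norm2}. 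After bounding the Mittag-Leffler factor by $1$ via $0<E_{\beta,1}(-z)\le 1$ for $z\ge 0$ (as invoked in the proof of Theorem~\ref{thm:posol}), I obtain statement (i).

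Statement (ii) then follows from (i) by the routine tail estimate $\sum_{\ell\ge L+1}(2\ell+1)\ell^{-\nu}\le C L^{-(\nu-2)}$ for $\nu>2$, and statement (iv) is an immediate application of Chebyshev's inequality to the $L_2(\Omega)$ bound from (iii).

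For (iii), the strategy is to repeat the calculation pointwise at a fixed $\PT{x}\in\sph{2}$. Using uncorrelatedness, the pointwise second moment $\expect{|\posol(t,\PT{x})-\posol_L(t,\PT{x})|^2}$ simplifies to
\begin{equation*}
\sum_{\ell\ge L+1}\sum_{m=-\ell}^{\ell} \bigl|E_{\beta,1}(-t^\beta\psi(\eigvs))\bigr|^2 \left(\widehat{\sigma}_\ell^2 \,|\dsh(\PT{x})|^2 + \widetilde{\sigma}_\ell^2 \,|\csh(\PT{x})|^2\right).
\end{equation*}
Taking the trace of both sides of the addition theorem \eqref{eq:add.thm.vsh} yields $\sum_m |\dsh(\PT{x})|^2 = \mathrm{tr}\,\dlg(\PT{x},\PT{x})$ together with the analogous identity for $\csh$, and both traces are bounded uniformly in $\PT{x}$ by a constant multiple of $2\ell+1$ (a standard diagonal estimate for Legendre tensor kernels; see \cite[Chapter~5]{FrSc2009}). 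Substituting recovers the right-hand side of (i)/(ii) up to an absolute constant, so those bounds transfer to the $L_2(\Omega)$-norm uniformly over the sphere.

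The principal technical point is this last uniform pointwise estimate in (iii); once it is in hand, the remainder is a near-verbatim replay of the proof of Corollary~\ref{cor1}, the only substantive change being the replacement of $\int_0^t|G_{\psi(\eigvs)}(t-s)|^{1/\hurst}\IntD{s}$-type factors by the simpler uniform bound $E_{\beta,1}(-t^\beta\psi(\eigvs))\le 1$.
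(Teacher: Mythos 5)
Your proposal is correct and follows essentially the same route as the paper: tail of the expansion, Parseval plus the uncorrelatedness conditions \eqref{norm1}--\eqref{norm2}, the bound $0<E_{\beta,1}(-z)\le 1$, the standard tail sum estimate for (ii), and Chebyshev for (iv). The one place you go beyond the paper is part (iii), where you make explicit the pointwise computation via the trace of the addition theorem, $\sum_m|\dsh(\PT{x})|^2=\mathrm{tr}\,\dlg(\PT{x},\PT{x})\le C(2\ell+1)$ uniformly in $\PT{x}$; the paper simply asserts that the estimates do not depend on $\PT{x}$, so your version supplies a detail the published argument leaves implicit.
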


\begin{proof}
	The proof uses the representation
	\[
		\posol(t,\PT{x})-\posol_L(t,\PT{x})=\sum_{\ell=
			L+1}^{\infty}\sum_{m=-\ell}^{\ell}E_{\beta,1}\bigl(-t^{\beta}\psi(\eigvs)%
		\bigr)\left(\dfco{(\rvf_0)}\dsh(\PT{x})+\cfco{(\rvf_0)}\csh(\PT{x})\right),
	\]
	the estimate (\ref{norm_u}), the property (\ref{norm2}), and is analogous to
	the proof of Corollary~\ref{cor1}.\hfill {}
\end{proof}

If the Fourier coefficients $\dfco{(\rvf_0)}$ and $\cfco{(\rvf_0)}$ are
uncorrelated with $\BMa(t)$ and $\BMb(t)$, then Theorems~\ref%
{thm:sol.vfSDE.S2} and \ref{thm:posol} yield the solution of %
\eqref{eq:vfSDE.S2} with initial condition $\sol(0,\PT{x})=\posol(t_{0}, %
\PT{x})$.

\begin{corollary}
	\label{cor:sol.fSDE.sCaupb} Let the conditions of Theorems~\ref%
	{thm:sol.vfSDE.S2} and \ref{thm:posol} be satisfied and $\rvf_{0}(\PT{x})$
	is uncorrelated with $\vfBmsph(t,\PT{x})$. Then, the solution of the SPDE~%
	\eqref{eq:vfSDE.S2} with the initial condition $\sol(0,\PT{x})=\posol(t_0, %
	\PT{x})$, $t_0>0$, is given by \eqref{eq:sol.vfSDE.sCaupb}.
\end{corollary}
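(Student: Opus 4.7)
The plan is to reduce Corollary~\ref{cor:sol.fSDE.sCaupb} to a superposition of Theorems~\ref{thm:sol.vfSDE.S2} and \ref{thm:posol}, exploiting the linearity of the SPDE~\eqref{eq:vfSDE.S2}. First I would split the candidate solution as $\sol(t,\PT{x}) = \sol^{(0)}(t,\PT{x}) + \sol^{(1)}(t,\PT{x})$, where $\sol^{(0)}$ is the homogeneous Cauchy solution to \eqref{eq:sCau.pb} with initial datum $\posol(t_0,\PT{x})$, and $\sol^{(1)}$ is the solution to \eqref{eq:vfSDE.S2} with zero initial condition. This superposition is justified because both $\pdt{t}$ and $\psi(-\LBo)$ are linear, and it reduces the corollary to verifying that the two expected summands are furnished by the two preceding theorems.

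Next, I would apply Theorem~\ref{thm:posol} to $\sol^{(0)}$ with the random tangent field $\rvf_0$ replaced by $\posol(t_0,\PT{x})$. By \eqref{eq:polsol1}, the divergence-free and curl-free vector spherical harmonic Fourier coefficients of $\posol(t_0,\PT{x})$ are $\dfco{(\rvf_0)} E_{\beta,1}(-t_0^\beta \psi(\eigvs))$ and $\cfco{(\rvf_0)} E_{\beta,1}(-t_0^\beta \psi(\eigvs))$, which is exactly the initial-condition contribution appearing in \eqref{eq:sol.vfSDE.sCaupb}. Theorem~\ref{thm:sol.vfSDE.S2} then handles $\sol^{(1)}$, producing the stochastic integrals $\int_0^t s^{\beta-1} E_{\beta,\beta}(-s^\beta \psi(\eigvs))\IntBa(s)$ and $\int_0^t s^{\beta-1} E_{\beta,\beta}(-s^\beta \psi(\eigvs))\IntBb(s)$ against $\dsh$ and $\csh$ respectively. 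Summing the two parts then recovers \eqref{eq:sol.vfSDE.sCaupb} termwise.

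The main technical point will be establishing convergence of the combined series in $\vLppsph{2}{2}$. This is where the hypothesis that $\rvf_0$ is uncorrelated with $\vfBmsph$ enters: it forces the cross expectations between each Fourier coefficient of $\posol(t_0,\PT{x})$ and each stochastic integral against $\BMa$ or $\BMb$ to vanish at every mode $(\ell,m)$. By the Parseval identity~\eqref{eq:vf.Parseval} together with \eqref{eq:rvf.L2}, the squared $\vLppsph{2}{2}$-norm of the candidate solution then decouples into $\|\sol^{(0)}\|_{\vLppsph{2}{2}}^2 + \|\sol^{(1)}\|_{\vLppsph{2}{2}}^2$, and each summand is already known to be finite by Theorems~\ref{thm:posol} and \ref{thm:sol.vfSDE.S2}. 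I expect the only genuine obstacle to be precisely this mode-by-mode uncorrelatedness argument: one must check carefully that the hypothesis between $\rvf_0$ and the driving Brownian motions kills every cross term appearing after expanding the squared norm, since without this clean decoupling one would not immediately inherit $L_2$-convergence from the two theorems.
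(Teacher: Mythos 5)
Your overall strategy --- superposition of a homogeneous Cauchy solution and the zero-initial-condition stochastic solution, with the uncorrelatedness hypothesis controlling the combined series --- is exactly what the paper intends: the paper offers no proof beyond the sentence preceding the corollary, which asserts that Theorems~\ref{thm:sol.vfSDE.S2} and~\ref{thm:posol} combine by linearity. However, there is a concrete gap in your second step. Applying Theorem~\ref{thm:posol} to the Cauchy problem \eqref{eq:sCau.pb} with initial datum $\posol(t_0,\PT{x})$, formula \eqref{eq:polsol1} gives
\begin{equation*}
	\sol^{(0)}(t,\PT{x})=\sum_{\ell=1}^{\infty}\sum_{m=-\ell}^{\ell}E_{\beta,1}\bigl(-t^{\beta}\psi(\eigvs)\bigr)\,E_{\beta,1}\bigl(-t_{0}^{\beta}\psi(\eigvs)\bigr)\left(\dfco{(\rvf_0)}\dsh(\PT{x})+\cfco{(\rvf_0)}\csh(\PT{x})\right),
\end{equation*}
that is, the propagator $E_{\beta,1}(-t^{\beta}\psi(\eigvs))$ in the new time variable necessarily multiplies the coefficients of the initial datum. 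The target formula \eqref{eq:sol.vfSDE.sCaupb} instead carries the $t$-independent coefficient $\dfco{(\rvf_0)}E_{\beta,1}(-t_{0}^{\beta}\psi(\eigvs))$; the two expressions agree only at $t=0$, so your decomposition does not recover \eqref{eq:sol.vfSDE.sCaupb} ``termwise'' as you claim.

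If, to match \eqref{eq:sol.vfSDE.sCaupb} literally, you instead take $\sol^{(0)}(t,\cdot)\equiv\posol(t_0,\cdot)$ frozen in $t$, then $\sol^{(0)}$ is no longer a solution of \eqref{eq:sCau.pb}: the Caputo--Djrbashian derivative annihilates a $t$-constant field while $\psi(-\LBo)$ does not, so the superposition leaves the residual term $\psi(-\LBo)\posol(t_0,\PT{x})$ unaccounted for in \eqref{eq:vfSDE.S2}. You must either insert the missing propagator (and then reconcile the result with \eqref{eq:sol.vfSDE.sCaupb} and with Proposition~\ref{prop:covfn.sol2}, whose covariance formula also uses the frozen coefficient), or state precisely the weaker sense in which the frozen field solves the equation --- for instance, as the solution of the equation for the increment $\sol(t,\cdot)-\posol(t_0,\cdot)$ with zero initial condition. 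The remaining points in your proposal --- the identification of the Fourier coefficients of $\posol(t_0,\PT{x})$ and the $L_2$ decoupling from uncorrelatedness (for which, incidentally, the crude bound $\|a+b\|^2\le 2\|a\|^2+2\|b\|^2$ already gives convergence without any uncorrelatedness) --- are fine.
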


\subsection{Covariance Function}

This subsection derives an explicit formula for the covariance matrix
function of the solution \eqref{eq:sol.vfSDE.S2} in terms of the Legendre
tensor kernels.

For $\beta \in (0,1]$, $\hurst\in \lbrack 1/2,1)$, let us define
\begin{equation}
    E_{\beta ,\hurst}^{\ast }\bigl(t,z\bigr):=\Gamma (2\hurst+1)\int_{0}^{t}u^{2%
        \beta +2\hurst-3}E_{\beta ,\beta }(-u^{\beta }z)E_{\beta ,\beta +2\hurst-1}%
    \bigl(-u^{\beta }z\bigr)\IntD{u},\;\;t,z\in \R.  \label{eq:Estar}
\end{equation}%
We will need the following convolution property of the generalized
Mittag-Leffler function (\cite[Eq. (2.2.14)]{MaHa2008}):%
\begin{equation}
    \int_{0}^{1}z^{\beta -1}\left( 1-z\right) ^{\sigma -1}E_{\alpha ,\beta
    }\left( xz^{\alpha }\right) dz=\Gamma \left( \sigma \right) E_{\alpha
        ,\sigma +\beta }\left( x\right) ,  \label{4.15}
\end{equation}%
where $\alpha >0,\ \beta ,\sigma \in \mathbb{C},\ \Re\left( \beta
\right) >0,\ \Re\left( \sigma \right) >0$.

\begin{proposition}
    \label{prop:covfn.sol2} Let the conditions of Theorems~\ref{thm:sol.vfSDE.S2}
    and \ref{thm:posol} be satisfied and the centred random tangent field $\rvf%
    _{0}(\xi,\PT{x})$ on $\sph{2}$ have normally distributed Fourier
    coefficients $\dfco{(\rvf_0)}$ and $\cfco{(\rvf_0)}$ with mean zero and
    variances $\widehat{\sigma}_{\ell}^{2}$ and $\widetilde{\sigma}_{\ell}^{2}$
    respectively. Then, the covariance matrix function of the solution $\sol(t,%
    \PT{x})$ in \eqref{eq:sol.vfSDE.sCaupb} is a tensor field on $\sph{2}$ given
    by
    \begin{align*}
        \cov{\sol(t,\PT{x}),\sol(t,\PT{y})}& =\sum_{\ell =1}^{\infty}\left\{\dlg(%
        \PT{x},\PT{y})\Bigl(\widehat{\sigma}_{\ell}^{2}\left(E_{\beta,1}\bigl(%
        -t_{0}^{\beta}\psi(\eigvs)\bigr)\right)^{2}+A_{\ell}^{1}E_{\beta,\hurst%
        }^{\ast}\bigl(t,\psi (\eigvs)\bigr)\Bigr)\right. \\
        & \hspace{1.cm}\left. +\clg(\PT{x},\PT{y})\Bigl(\widetilde{\sigma}%
        _{\ell}^{2}\left(E_{\beta,1}\bigl(-t_{0}^{\beta}\psi(\eigvs)\bigr)%
        \right)^{2}+A_{\ell}^{2}E_{\beta,\hurst}^{\ast}\bigl(t,\psi (\eigvs)\bigr)%
        \Bigr)\right\},
    \end{align*}
    where $\dlg(\PT{x},\PT{y})$ and $\clg(\PT{x},\PT{y})$ are the Legendre
    tensor kernels given by \eqref{eq:dlg.clg}.
\end{proposition}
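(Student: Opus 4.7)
The plan is to expand the tensor product
\[
\cov{\sol(t,\PT{x}),\sol(t,\PT{y})} = \expect{\sol(t,\PT{x})\otimes \sol(t,\PT{y})},
\]
which equals the uncentred expectation because $\rvf_0$ and the driving fBms are centred. Writing, for compactness,
\[
\hat X_{\ell m}(t) := \dfco{(\rvf_0)}\,E_{\beta,1}\bigl(-t_0^{\beta}\psi(\eigvs)\bigr) + \int_0^t s^{\beta-1}E_{\beta,\beta}\bigl(-s^{\beta}\psi(\eigvs)\bigr)\IntBa(s),
\]
and $\tilde X_{\ell m}(t)$ for the analogue with $\BMa$ replaced by $\BMb$ and $\dfco{(\rvf_0)}$ replaced by $\cfco{(\rvf_0)}$, each term of the tensor product contributes a scalar factor of the form $\expect{\hat X_{\ell m}(t)\,\overline{\hat X_{\ell' m'}(t)}}$ (and three analogous mixed variants) multiplying one of the four rank-one fields $\dsh(\PT{x})\otimes\dsh(\PT{y})$, $\dsh(\PT{x})\otimes\csh(\PT{y})$, $\csh(\PT{x})\otimes\dsh(\PT{y})$, $\csh(\PT{x})\otimes\csh(\PT{y})$.

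First I would verify that all off-diagonal scalar factors vanish. Cross $\hat X$--$\tilde X$ terms are zero by \eqref{norm1} combined with the independence of $\BMa$ and $\BMb$; within a given block, terms with $(\ell,m)\neq(\ell',m')$ vanish by the Kronecker deltas in \eqref{norm2} and by the independence of the fBms across different index pairs; and the initial-condition pieces are uncorrelated with the fBm integrals by the standing hypothesis of Corollary~\ref{cor:sol.fSDE.sCaupb}. So only the diagonal variances survive, and each splits as
\[
\expect{\bigl|\hat X_{\ell m}(t)\bigr|^2} = \widehat{\sigma}_\ell^{2}\bigl(E_{\beta,1}(-t_0^{\beta}\psi(\eigvs))\bigr)^2 + \expect{\Bigl|\int_0^t s^{\beta-1}E_{\beta,\beta}(-s^{\beta}\psi(\eigvs))\IntBa(s)\Bigr|^2},
\]
and analogously for $\expect{|\tilde X_{\ell m}(t)|^2}$ with $\widetilde{\sigma}_\ell^{2}$ in place of $\widehat{\sigma}_\ell^{2}$.

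The next step is to identify the fBm-integral second moment with $A_\ell^{1}\,E_{\beta,\hurst}^{\ast}(t,\psi(\eigvs))$. For $\hurst\in(1/2,1)$ this rests on the Gripenberg--Norros formula
\[
\expect{\Bigl|\int_0^t f(s)\IntBa(s)\Bigr|^2} = A_\ell^{1}\,\hurst(2\hurst-1)\int_0^t\!\!\int_0^t f(s)f(u)|s-u|^{2\hurst-2}\,ds\,du
\]
applied with $f(s) = s^{\beta-1}E_{\beta,\beta}(-s^{\beta}z)$ and $z = \psi(\eigvs)$; the boundary case $\hurst = 1/2$ reduces to the It\^o isometry. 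The double integral is then reduced to the single integral \eqref{eq:Estar} by expanding one of the Mittag-Leffler factors as a power series, evaluating the inner integral via the Beta-function identity $\int_0^1 v^{a-1}(1-v)^{2\hurst-2}dv = \Gamma(a)\Gamma(2\hurst-1)/\Gamma(a+2\hurst-1)$, and re-summing into a Mittag-Leffler function with second index shifted to $\beta+2\hurst-1$; the prefactor $\Gamma(2\hurst+1) = 2\hurst(2\hurst-1)\Gamma(2\hurst-1)$ is absorbed into the definition of $E_{\beta,\hurst}^{\ast}$.

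Finally, the sums over $m$ collapse by the addition theorem \eqref{eq:add.thm.vsh}: for each $\ell\ge1$, $\sum_{m=-\ell}^\ell \dsh(\PT{x})\otimes\dsh(\PT{y}) = \dlg(\PT{x},\PT{y})$ and $\sum_{m=-\ell}^\ell \csh(\PT{x})\otimes\csh(\PT{y}) = \clg(\PT{x},\PT{y})$. Grouping the initial and noise contributions index by index yields the stated expansion; convergence of the resulting series (as a tensor field on $\sph{2}$) follows from the $\vLppsph{2}{2}$-bounds already established in Theorems~\ref{thm:sol.vfSDE.S2} and \ref{thm:posol}. The principal obstacle is the reduction of the singular double integral to \eqref{eq:Estar}: one must justify the interchange of sum and integral against the kernel $|s-u|^{2\hurst-2}$ and carry out the Beta-function manipulation carefully, with the $\hurst=1/2$ case requiring a separate treatment via It\^o isometry rather than as a limit of the $\hurst>1/2$ formula.
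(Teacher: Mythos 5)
Your proposal is correct and follows essentially the same route as the paper: centredness plus the uncorrelatedness/orthogonality assumptions diagonalize the tensor-product expectation, the M\'emin--Mishura--Valkeila second-moment formula converts the stochastic double integral into a deterministic one with kernel $\hurst(2\hurst-1)|u-v|^{2\hurst-2}$, and the addition theorem collapses the $m$-sums into the Legendre tensor kernels. The only cosmetic difference is that where you re-derive the reduction of the double integral to \eqref{eq:Estar} by power-series expansion and the Beta integral, the paper simply invokes the convolution property of the generalized Mittag-Leffler function, which is proved by exactly that computation.
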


\begin{proof}
    By assumption, $\expect{\sol(t,\PT{x})}=0$ for $t\geq 0$ and $\PT{x}\in %
    \sph{2}$. Then, by \eqref{eq:sol.vfSDE.sCaupb} and the uncorrelatedness of $%
    \dfco{(\rvf_0)},\cfco{(\rvf_0)},\BMa(t)$ and $\BMb(t)$ for $\ell \geq 1$ and
    $m=-\ell ,\dots ,\ell $,
   {\small  \begin{align}
        & \cov{\sol(t,\PT{x}),\sol(t,\PT{y})}=\expect{\sol(t,\PT{x})\otimes\sol(t,
            \PT{y})}  \notag \\
        & \hspace{0.2cm}=\mathbb{E}\Biggl[\sum_{\ell =1}^{\infty }\sum_{m=-\ell
        }^{\ell }\sum_{\ell ^{\prime }=1}^{\infty }\sum_{m^{\prime }=-\ell ^{\prime
        }}^{\ell ^{\prime }}\left\{ \left( \dfco{(\rvf_0)}E_{\beta ,1}\bigl(%
        -t_{0}^{\beta }\psi (\eigvs)\bigr)+\int_{0}^{t}\left( t-s\right) ^{\beta
            -1}E_{\beta ,\beta }\bigl(-\left( t-s\right) ^{\beta }\psi (\eigvs)\bigr)\right. \right. \notag \\
       &\hspace{0.3cm} \left. \left. \times\IntBa(s)\right) \dsh(\PT{x})
        +\left( \cfco{(\rvf_0)}E_{\beta ,1}\bigl(%
        -t_{0}^{\beta }\psi (\eigvs)\bigr)+\int_{0}^{t}\left( t-s\right) ^{\beta
            -1}E_{\beta ,\beta }\bigl(-\left( t-s\right) ^{\beta }\psi (\eigvs)\bigr)\right. \right. \notag \\
                   &\hspace{0.3cm} \left. \left. \times
        \IntBb(s)\right) \csh(\PT{x})\right\}  \otimes \left\{ \left( \dfco[\ell' m']{(\rvf_0)}E_{\beta ,1}%
        \bigl(-t_{0}^{\beta }\psi (\eigvs[\ell'])\bigr)+\int_{0}^{t}\left(
        t-s\right) ^{\beta -1}E_{\beta ,\beta }\bigl(-\left( t-s\right) ^{\beta
        }\psi (\eigvs[\ell'])\bigr)
        \right. \right. \notag
                 \end{align}
    }
{\small  \begin{align}
        & \left. \left. \times \IntBa[\ell' m'](s)\right) \dsh[\ell' m'](\PT{y}%
        +\left( \cfco[\ell' m']{(\rvf_0)}E_{\beta ,1}\bigl(%
        -t_{0}^{\beta }\psi (\eigvs[\ell'])\bigr)+\int_{0}^{t}\left( t-s\right)
        ^{\beta -1}E_{\beta ,\beta }\bigl(-\left( t-s\right) ^{\beta }\psi (%
        \eigvs[\ell'])\bigr) \right. \right. \notag \\
        & \left. \left. \times \IntBb[\ell' m'](s)\right) \csh[\ell' m'](\PT{y}%
        )\right\} \Biggr]  =\mathbb{E}\Biggl[\sum_{\ell =1}^{\infty }\sum_{m=-\ell
        }^{\ell }\biggl\{\dsh(\PT{x})\otimes \dsh(\PT{y})\biggl(\bigl|\dfco{(\rvf_0)}%
        E_{\beta ,1}\bigl(-t_{0}^{\beta }\psi (\eigvs)\bigr)\bigr|^{2}  \notag \\
        & \hspace{0.1cm} +\int_{0}^{t}\int_{0}^{t}\left( t-u\right) ^{\beta -1}\left(
        t-v\right) ^{\beta -1}E_{\beta ,\beta }\bigl(-\left( t-u\right) ^{\beta
        }\psi (\eigvs)\bigr)E_{\beta ,\beta }\bigl(-\left( t-v\right) ^{\beta }\psi (%
        \eigvs)\bigr) \IntBa(u)\notag \\
        &\hspace{0.1cm} \times \IntBa(v)\biggr) +\csh(\PT{x})\otimes \csh(\PT{y})\biggl(\bigl|\cfco{(\rvf_0)}%
        E_{\beta ,1}\bigl(-t_{0}^{\beta }\psi (\eigvs)\bigr)\bigr|^{2}+\int_{0}^{t}\int_{0}^{t}\left( t-u\right) ^{\beta -1} \notag \\
        &\hspace{1.5cm} \times \left(
        t-v\right) ^{\beta -1} E_{\beta ,\beta }\bigl(-\left( t-u\right) ^{\beta
        }\psi (\eigvs)\bigr)E_{\beta ,\beta }\bigl(-\left( t-v\right) ^{\beta }\psi (%
        \eigvs)\bigr)\IntBb(u)\IntBb(v)\biggr)\biggr\}\Biggr],  \notag \\[-8mm]
        &  \label{eq:cov.sol1}
    \end{align}}
where the first equality uses that $\dfco{(\rvf_0)},\cfco{(\rvf_0)},\BMa(t)$
    and $\BMb(t)$ are centred and the third equality uses their uncorrelatedness.

   By \cite[Eq~1.3]{MeMiVa2001}, the expectation of each double stochastic
    integral in \eqref{eq:cov.sol1} becomes
    {\small \begin{align}
        & \mathbb{E}\left( \int_{0}^{t}\int_{0}^{t}\left( t-u\right) ^{\beta
            -1}\left( t-v\right) ^{\beta -1}E_{\beta ,\beta }\bigl(-\left( t-u\right)
        ^{\beta }\psi (\eigvs)\bigr)E_{\beta ,\beta }\bigl(-\left( t-v\right)
        ^{\beta }\psi (\eigvs)\bigr)\IntBa[\ell' m'](u)\right.\notag \\
        &\qquad \left. \times\IntBb\left( v\right) \right)
=A_{\ell }^{1}\hurst(2\hurst-1)\int_{0}^{t}\int_{0}^{t}%
        \left( t-u\right) ^{\beta -1}\left( t-v\right) ^{\beta -1}E_{\beta ,\beta
        }(-\left( t-u\right) ^{\beta }\psi (\eigvs))\notag \\
        &\qquad \times E_{\beta ,\beta }(-\left(
        t-v\right) ^{\beta }\psi (\eigvs))|u-v|^{2\hurst-2}\IntD{u}\IntD{v} =A_{\ell }^{1}\hurst(2\hurst-1)\int_{0}^{t}\left( t-u\right)
        ^{\beta -1}\notag \\
        &\qquad  \times E_{\beta ,\beta }(-\left( t-u\right) ^{\beta }\psi (\eigvs))%
        \IntD{u}   \left( \int_{0}^{u}\left( t-v\right) ^{\beta
            -1}E_{\beta ,\beta }(-\left( t-v\right) ^{\beta }\psi (\eigvs))(u-v)^{2\hurst
            -2}\IntD{v}\right.\notag \\
            &\qquad \left. +\int_{u}^{t}\left( t-v\right) ^{\beta -1}E_{\beta ,\beta
        }(-\left( t-v\right) ^{\beta }\psi (\eigvs))(v-u)^{2\hurst-2}\IntD{v}\right). \notag \\[-7mm]
        &  \label{eq:cov.sInt}
    \end{align}\vspace{2mm}
}

    The first integral in the large brackets reads%
    \begin{equation*}
        \int_{0}^{u}\left( t-v\right) ^{\beta -1}E_{\beta ,\beta }(-\left(
        t-v\right) ^{\beta }\psi (\lambda _{l}))(t-v-\left( t-u\right) )^{2H-2}dv
    \end{equation*}%
    \begin{equation*}
        =\int_{t-u}^{t}z^{\beta -1}E_{\beta ,\beta }(-z^{\beta }\psi (\lambda
        _{l}))\left( z-(t-u)\right) ^{2H-2}dz
    \end{equation*}\vspace{2mm}
    by the change of variable $t-v=z$. Thus,%
    {\small \begin{align}
   &     \int_{0}^{t}\left( t-u\right) ^{\beta -1}E_{\beta ,\beta }(-\left(
        t-u\right) ^{\beta }\psi (\lambda _{l}))\left( \int_{0}^{u}\left( t-v\right)
        ^{\beta -1}E_{\beta ,\beta }(-\left( t-v\right) ^{\beta }\psi (\lambda
        _{l}))(u-v)^{2H-2}dv\right) du\notag \\
& \hspace{2mm}       =\int_{0}^{t}\left( t-u\right) ^{\beta -1}E_{\beta ,\beta }(-\left(
        t-u\right) ^{\beta }\psi (\lambda _{l}))\left( \int_{t-u}^{t}z^{\beta
            -1}E_{\beta ,\beta }(-z^{\beta }\psi (\lambda _{l}))\left( z-(t-u)\right)
        ^{2H-2}dz\right) du\notag \\
&\hspace{2mm}      =\int_{0}^{t}s^{\beta -1}E_{\beta ,\beta }(-s^{\beta }\psi (\lambda
        _{l}))\left( \int_{s}^{t}z^{\beta -1}E_{\beta ,\beta }(-z^{\beta }\psi
        (\lambda _{l}))\left( z-s\right) ^{2H-2}dz\right) ds\notag \\
& \qquad\qquad       =\int_{0}^{t}z^{\beta -1}E_{\beta ,\beta }(-z^{\beta }\psi (\lambda
        _{l}))\left( \int_{0}^{z}s^{\beta -1}E_{\beta ,\beta }(-s^{\beta }\psi
        (\lambda _{l}))\left( z-s\right) ^{2H-2}ds\right) dz,\notag \\[-7mm]
        &   \label{4.16}
    \end{align}}
    by the change of
    variable $t-u=s$ and changing the order of integration.

    For the second integral in the large
    brackets, with $v-u=t-u-\left( t-v\right) $ and the change of variable $t-v=s
    $, we get%
    \begin{equation*}
        \int_{u}^{t}\left( t-v\right) ^{\beta -1}E_{\beta ,\beta }(-\left(
        t-v\right) ^{\beta }\psi (\lambda _{l}))(v-u)^{2H-2}dv
    \end{equation*}%
    \begin{equation*}
        =\int_{0}^{t-u}s^{\beta -1}E_{\beta ,\beta }(-s^{\beta }\psi (\lambda
        _{l}))(t-u-s)^{2H-2}ds.
    \end{equation*}%
    Thus,
        {\small
    \begin{equation*}
        \int_{0}^{t}\left( t-u\right) ^{\beta -1}E_{\beta ,\beta }(-\left(
        t-u\right) ^{\beta }\psi (\lambda _{l}))\left( \int_{u}^{t}\left( t-v\right)
        ^{\beta -1}E_{\beta ,\beta }(-\left( t-v\right) ^{\beta }\psi (\lambda
        _{l}))(v-u)^{2H-2}dv\right) du
    \end{equation*}%
    \begin{equation*}
        =\int_{0}^{t}\left( t-u\right) ^{\beta -1}E_{\beta ,\beta }(-\left(
        t-u\right) ^{\beta }\psi (\lambda _{l}))\left( \int_{0}^{t-u}s^{\beta
            -1}E_{\beta ,\beta }(-s^{\beta }\psi (\lambda _{l}))(t-u-s)^{2H-2}ds\right)
        du
    \end{equation*}%
    \begin{equation}
        =\int_{0}^{t}z^{\beta -1}E_{\beta ,\beta }(-z^{\beta }\psi (\lambda
        _{l}))\left( \int_{0}^{z}s^{\beta -1}E_{\beta ,\beta }(-s^{\beta }\psi
        (\lambda _{l}))\left( z-s\right) ^{2H-2}ds\right) dz  \label{4.17}
    \end{equation}\\
    \vspace{2mm}}
    by the change of variable $t-u=z$. In view of (\ref{4.16}), (\ref{4.17}) and
    (\ref{eq:cov.sInt}), we then obtain%
      {\small   \begin{equation*}
        \mathbb{E}\left( \int_{0}^{t}\int_{0}^{t}\left( t-u\right) ^{\beta -1}\left(
        t-v\right) ^{\beta -1}E_{\beta ,\beta }\bigl(-\left( t-u\right) ^{\beta
        }\psi (\eigvs)\bigr)E_{\beta ,\beta }\bigl(-\left( t-v\right) ^{\beta }\psi (%
        \eigvs)\bigr)\IntBa[\ell' m'](u)\IntBb\left( v\right) \right)
    \end{equation*}%
    \begin{align}
   &\hspace{1.2cm}     =2A_{l}^{1}H\left( 2H-1\right) \int_{0}^{t}z^{\beta -1}E_{\beta ,\beta
        }(-z^{\beta }\psi (\lambda _{l}))\left( \int_{0}^{z}s^{\beta -1}E_{\beta
            ,\beta }(-s^{\beta }\psi (\lambda _{l}))\left( z-s\right) ^{2H-2}ds\right)
        dz.
\notag \\[-7mm]
&   \label{4.18}
\end{align}}
    With a rescaling $w=\frac{s}{z}$, we get%
    \begin{equation*}
        \int_{0}^{z}s^{\beta -1}E_{\beta ,\beta }(-s^{\beta }\psi (\lambda
        _{l}))\left( z-s\right) ^{2H-2}ds
    \end{equation*}%
    \begin{equation*}
        =z^{\beta +2H-2}\int_{0}^{1}w^{\beta -1}\left( 1-w\right) ^{2H-2}E_{\beta
            ,\beta }(-z^{\beta }\psi (\lambda _{l})w^{\beta })dw
    \end{equation*}%
    \begin{equation}
        =z^{\beta +2H-2}\Gamma \left( 2H-1\right) E_{\beta ,\beta +2H-1}(-z^{\beta
        }\psi (\lambda _{l}))  \label{4.19}
    \end{equation}%
    using the convolution property (\ref{4.15}) of the generalized
    Mittag-Leffler function. Putting (\ref{4.19}) into (\ref{4.18}) we finally
    obtain%
{\small   \begin{equation*}
        \mathbb{E}\left( \int_{0}^{t}\int_{0}^{t}\left( t-u\right) ^{\beta -1}\left(
        t-v\right) ^{\beta -1}E_{\beta ,\beta }\bigl(-\left( t-u\right) ^{\beta
        }\psi (\eigvs)\bigr)E_{\beta ,\beta }\bigl(-\left( t-v\right) ^{\beta }\psi (%
        \eigvs)\bigr)\IntBa[\ell' m'](u)\IntBb\left( v\right) \right)
    \end{equation*}%
    \begin{equation*}
        =A_{l}^{1}\Gamma \left( 2H+1\right) \int_{0}^{t}z^{2\beta +2H-3}E_{\beta
            ,\beta }(-z^{\beta }\psi (\lambda _{l}))E_{\beta ,\beta +2H-1}(-z^{\beta
        }\psi (\lambda _{l}))dz,
    \end{equation*}%
}
    which is the expression (\ref{eq:Estar}) with an according change of
    notation.

    Using the notation of \eqref{eq:Estar}, it follows from \eqref{eq:cov.sol1}
    and \eqref{eq:cov.sInt} that
    \begin{align}
        & \cov{\sol(t,\PT{x}),\sol(t,\PT{y})}  \notag \\
        & =\sum_{\ell =1}^{\infty }\sum_{m=-\ell }^{\ell }\Bigl\{\dsh(\PT{x})\otimes %
        \dsh(\PT{y})\Bigl(\expect{\bigl|\dfco{(\rvf_0)}\bigr|^{2}}\left( E_{\beta ,1}%
        \bigl(-t_{0}^{\beta }\psi (\eigvs)\bigr)\right) ^{2}+A_{\ell }^{1}E_{\beta ,%
            \hurst}^{\ast }\bigl(t,\psi (\eigvs)\bigr)\Bigl)  \notag \\
        & \qquad  +\csh(\PT{x})\otimes \csh(\PT{y})\Bigl(\expect{\bigl|\cfco{(%
                \rvf_0)}\bigr|^{2}}\left( E_{\beta ,1}\bigl(-t_{0}^{\beta }\psi (\eigvs)%
        \bigr)\right) ^{2}+A_{\ell }^{2}E_{\beta ,\hurst}^{\ast }\bigl(t,\psi (\eigvs%
        )\bigr)\Bigl)\Bigr\}  \notag \\
        & =\sum_{\ell =1}^{\infty }\left\{ \dlg(\PT{x},\PT{y})\Bigl(\widehat{\sigma }%
        _{\ell }^{2}\left( E_{\beta ,1}\bigl(-t_{0}^{\beta }\psi (\eigvs)\bigr)%
        \right) ^{2}+A_{\ell }^{1}E_{\beta ,\hurst}^{\ast }\bigl(t,\psi (\eigvs)%
        \bigr)\Bigr)\right.   \notag \\
        & \qquad \qquad \left. +\clg(\PT{x},\PT{y})\Bigl(\widetilde{\sigma }_{\ell
        }^{2}\left( E_{\beta ,1}\bigl(-t_{0}^{\beta }\psi (\eigvs)\bigr)\right)
        ^{2}+A_{\ell }^{2}E_{\beta ,\hurst}^{\ast }\bigl(t,\psi (\eigvs)\bigr)\Bigr)%
        \right\} ,  \notag \\[-7mm]
&   \label{estpq}
\end{align}
    where the last equality uses the addition theorem \eqref{eq:add.thm.vsh} for
    vector spherical harmonics.\hfill {}
\end{proof}

\begin{corollary}
	Let the conditions of Proposition~\ref{prop:covfn.sol2} be satisfied. Then,
	we have the following estimates.

	\begin{enumerate}
		\item[(i)] For $t>0$ the truncation error is bounded by
		\begin{equation*}
			\|\sol(t,\PT{x})-\sol_L(t,\PT{x})\|_{\vLppsph{2}{2}} \le
			C\left(\sum_{\ell=L+1}^{\infty}(2\ell+1)\left(\widehat{\sigma}_{\ell}^{2}+%
			\widetilde{\sigma}_{\ell}^{2}+\left(A_{\ell}^{1}+A_{\ell}^{2}\right)\left(%
			\psi(\eigvs)\right)^{\tau}\right)\right)^{1/2}.
		\end{equation*}

		\item[(ii)] For the magnitudes of variances that decay algebraically with
		order $\nu >2$, that is, there exist constants $C >0$ and $\ell_0\in \mathbb{%
			N}$ such that $\widehat{\sigma}_{\ell}^{2}+\widetilde{\sigma}%
		_{\ell}^{2}+\left(A_{\ell}^{1}+A_{\ell}^{2}\right)\left(\psi(\eigvs%
		)\right)^{\tau}\le C\cdot \ell^{-\nu}$ for all $\ell\ge \ell_0$, it holds
		\begin{equation*}
			\|\sol(t,\PT{x})-\sol_L(t,\PT{x})\|_{\vLppsph{2}{2}} \le C {L}^{-\frac{\nu-2%
				}{2}}.
		\end{equation*}

		\item[(iii)] Under conditions of the algebraic decay in (ii), for any $%
		\varepsilon>0$ it holds that
		\begin{equation*}
			\mathbf{P}\Big(|\sol(t,\PT{x})-\sol_L(t,\PT{x})|\ge\varepsilon\Big) \le
			\frac{C}{{L}^{\nu-2}\,\varepsilon^2}.
		\end{equation*}
	\end{enumerate}
\end{corollary}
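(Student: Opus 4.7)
The plan is to combine the arguments from Theorem~\ref{thm:sol.vfSDE.S2} and Theorem~\ref{thm:posol}, since by Corollary~\ref{cor:sol.fSDE.sCaupb} the solution $\sol(t,\PT{x})$ with initial condition $\posol(t_0,\PT{x})$ is given by the series~\eqref{eq:sol.vfSDE.sCaupb}, which is the sum of two uncorrelated pieces: an ``initial-data'' piece whose Fourier coefficients are $\dfco{(\rvf_0)} E_{\beta,1}(-t_0^\beta \psi(\eigvs))$ and $\cfco{(\rvf_0)} E_{\beta,1}(-t_0^\beta \psi(\eigvs))$, and a stochastic-integral piece driven by $\BMa$ and $\BMb$. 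The truncation error $\sol(t,\PT{x})-\sol_L(t,\PT{x})$ is the same series restricted to $\ell\ge L+1$, so the proofs reduce to combining the tail estimates already obtained for the two constituent terms.

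For part (i), I would first pass to $\expect{\|\cdot\|^2_{\tLp{2}{2}}}$ via~\eqref{eq:rvf.L2} and then apply the Parseval identity~\eqref{eq:vf.Parseval} with the orthonormality of $\{(\dsh,\csh)\}$. Expanding the square, the cross terms between the initial-data and stochastic-integral pieces vanish in expectation by the uncorrelatedness hypothesis of Corollary~\ref{cor:sol.fSDE.sCaupb}, while the cross terms at distinct $(\ell,m)$ vanish by \eqref{norm1}, \eqref{norm2} and the independence of the fractional Brownian motions. What remains is the diagonal sum
\begin{equation*}
\sum_{\ell=L+1}^{\infty}\sum_{m=-\ell}^{\ell}\Bigl\{\widehat{\sigma}_\ell^{\,2}\bigl|E_{\beta,1}(-t_0^\beta\psi(\eigvs))\bigr|^2 + A_\ell^{1}\,\expect{\Bigl|\int_0^t s^{\beta-1}E_{\beta,\beta}(-s^\beta\psi(\eigvs))\IntBa(s)\Bigr|^2} + \text{(curl-free analogue)}\Bigr\}.
\end{equation*}
I would then bound the first summand by $\widehat{\sigma}_\ell^{\,2}$ using $0<E_{\beta,1}(-z)\le 1$ (as in the proof of Theorem~\ref{thm:posol}) and the second by $C A_\ell^{1}(\psi(\eigvs))^\tau$ exactly as in~\eqref{proofTh4.1}. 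Summing over $m$ contributes the factor $(2\ell+1)$ and yields the bound in (i).

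Part (ii) follows at once from (i) since, for $\nu>2$,
\begin{equation*}
\sum_{\ell=L+1}^{\infty}(2\ell+1)\,\ell^{-\nu}\le C\sum_{\ell=L+1}^{\infty}\ell^{-(\nu-1)}\le C\,L^{-(\nu-2)}.
\end{equation*}
For part (iii), Chebyshev's inequality gives
\begin{equation*}
\mathbf{P}\bigl(|\sol(t,\PT{x})-\sol_L(t,\PT{x})|\ge \varepsilon\bigr)\le \varepsilon^{-2}\,\expect{|\sol(t,\PT{x})-\sol_L(t,\PT{x})|^2},
\end{equation*}
so the task reduces to a pointwise-in-$\PT{x}$ analogue of (ii). The same decomposition used for (i) leaves the $m$-sums $\sum_m |\dsh(\PT{x})|^2$ and $\sum_m |\csh(\PT{x})|^2$; taking the trace in the addition theorem~\eqref{eq:add.thm.vsh} at $\PT{x}=\PT{y}$ bounds each of these uniformly by $C(2\ell+1)$, after which the algebraic tail estimate delivers (iii).

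The argument is essentially routine once the structure of~\eqref{eq:sol.vfSDE.sCaupb} has been exploited; the only delicate point is verifying that all cross-correlation terms (between $\rvf_0$ and $\vfBmsph$, and between the divergence-free and curl-free coefficients) vanish, which relies on the uncorrelatedness hypothesis of Corollary~\ref{cor:sol.fSDE.sCaupb} together with \eqref{norm1}, \eqref{norm2} and the independence of $\BMa$ and $\BMb$.
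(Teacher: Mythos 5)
Your proof is correct and arrives at the same bounds, but it routes the estimate of the stochastic-integral contribution differently from the paper. The paper's proof quotes the explicit covariance formula of Proposition~\ref{prop:covfn.sol2} restricted to the tail $\ell\ge L+1$, so the fractional-Brownian part enters through the function $E_{\beta,\hurst}^{\ast}\bigl(t,\psi(\eigvs)\bigr)$, which is then bounded by $C\left(\psi(\eigvs)\right)^{\frac{2}{\beta}(1-\beta-\hurst)}$ via a substitution and the Cauchy--Schwarz inequality applied to the two Mittag--Leffler integrals, while the spatial part is controlled through the norm bounds $\Vert \dlg(\PT{x},\PT{y})\Vert\le C(2\ell+1)$ and $\Vert \clg(\PT{x},\PT{y})\Vert\le C(2\ell+1)$ for the Legendre tensor kernels. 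You instead bypass the covariance formula and re-run the moment inequality of \cite[Theorem~1.1]{MeMiVa2001} exactly as in \eqref{proofTh4.1}; this produces the same exponent $\frac{2}{\beta}(1-\beta-\hurst)$ and hence, after absorbing a constant when that exponent is negative (the same silent step the paper takes, using that $\psi(\eigvs)$ is bounded away from zero), the factor $\left(\psi(\eigvs)\right)^{\tau}$. The initial-data contribution is handled identically in both arguments via $0<E_{\beta,1}(-z)\le 1$, and your treatment of (iii) --- Chebyshev's inequality plus the pointwise identity $\sum_{m}|\dsh(\PT{x})|^2=\mathrm{tr}\,\dlg(\PT{x},\PT{x})\le C(2\ell+1)$ from the addition theorem \eqref{eq:add.thm.vsh} --- is equivalent to the paper's kernel-norm bound. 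The one point to make explicit in a final write-up is that discarding the cross terms in (iii) requires not just orthonormality of the basis (which, via Parseval, suffices for (i)) but the full uncorrelatedness of $\dfco{(\rvf_0)}$, $\cfco{(\rvf_0)}$, $\BMa(t)$, $\BMb(t)$; you do invoke this hypothesis, so there is no gap.
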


\begin{proof}
	One needs to use the following modification of the result in (\ref{estpq}):
	\begin{align}
		& \cov{\sol(t,\PT{x})-\sol_L(t,\PT{x}),\sol(t,\PT{x})-\sol_L(t,\PT{x})}
		\notag \\
		& =\sum_{\ell =L+1}^{\infty }\left\{ \dlg(\PT{x},\PT{y})\Bigl(\widehat{%
			\sigma }_{\ell }^{2}\left( E_{\beta ,1}\bigl(-t_{0}^{\beta }\psi (\eigvs)%
		\bigr)\right) ^{2}+A_{\ell }^{1}E_{\beta ,\hurst}^{\ast }\bigl(t,\psi (\eigvs%
		)\bigr)\Bigr)\right.  \notag \\
		& \qquad \qquad \left. +\clg(\PT{x},\PT{y})\Bigl(\widetilde{\sigma }_{\ell
		}^{2}\left( E_{\beta ,1}\bigl(-t_{0}^{\beta }\psi (\eigvs)\bigr)\right)
		^{2}+A_{\ell }^{2}E_{\beta ,\hurst}^{\ast }\bigl(t,\psi (\eigvs)\bigr)\Bigr)%
		\right\} .  \notag
	\end{align}%
	The statements of the Corollary then follow from the estimates of the
	Mittag-Leffler function for $z\geq 0:$
	\begin{equation*}
		0<E_{\beta ,1}(-z)\leq 1,
	\end{equation*}%
	\begin{align}
		& E_{\beta ,\hurst}^{\ast }\bigl(t,z\bigr)=\frac{\Gamma (2\hurst+1)}{z^{%
				\frac{2}{\beta }(\beta +H-1)}}\int_{0}^{tz^{1/\beta }}u^{2\beta +2\hurst%
			-3}E_{\beta ,\beta }(-u^{\beta })E_{\beta ,\beta +2\hurst-1}\bigl(-u^{\beta }%
		\bigr)\IntD{u}  \notag \\
		& \quad \leq \frac{\Gamma (2\hurst+1)}{z^{\frac{2}{\beta }(\beta +H-1)}}%
		\left( \int_{0}^{+\infty }\left( u^{\beta -1}E_{\beta ,\beta }(-u^{\beta
		})\right) ^{2}\IntD{u}\int_{0}^{+\infty }\left( u^{\beta +2\hurst-2}E_{\beta
			,\beta +2\hurst-1}\bigl(-u^{\beta }\bigr)\right) ^{2}\IntD{u}\right) ^{1/2}
		\notag \\
		& \quad \leq Cz^{\frac{2}{\beta }(1-\beta -H)}.  \notag
	\end{align}%
	and the upper bounds for the norms of the divergence-free and curl-free
	Legendre tensor kernels
	\begin{equation*}
		\Vert \clg(\PT{x},\PT{y})\Vert _{\vLppsph{2}{2}}\leq C(2\ell +1),\quad \Vert %
		\dlg(\PT{x},\PT{y})\Vert _{\vLppsph{2}{2}}\leq C(2\ell +1).
	\end{equation*}%
	Then the proof is similar to the proof of Corollary~\ref{cor1}.\hfill
	{}\hfill {}
\end{proof}

\section{Analysis of temporal increments}

This section derives an upper bound in the $L_{2}\left( \Omega \times
\mathbb{S}^{2}\right) $-norm of the temporal increments of the solution to
Equation (\ref{eq:vfSDE.S2}). The result shows the interplay between the
exponent $\beta $ of the fractional derivative in time and the Hurst
parameter $H$.

\begin{proposition}
	\label{Var} Let the conditions of Theorem~\ref{thm:sol.vfSDE.S2} be
	satisfied and $1-H<\beta <1.$ Then, for any $t, \tau\ge 0,$ which are not
	simultaneously equal to zero, the norm of the temporal increments of the
	solution $\mathbf{X}\left( t,\mathbf{x}\right) $ to Equation (\ref%
	{eq:vfSDE.S2}) is bounded as
	\begin{equation*}
		\left\Vert \mathbf{X}\left( t,\mathbf{x}\right) -\mathbf{X}\left( \tau ,%
		\mathbf{x}\right) \right\Vert _{L_{2}\left( \Omega \times \mathbb{S}%
			^{2}\right) }^{2}\leq C \frac{\left| t-\tau\right|^{2\hurst}}{%
			(t+\tau)^{2(1-\beta)}},
	\end{equation*}%
	where $C$ is a positive constant.
\end{proposition}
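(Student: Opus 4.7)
The plan is to work directly from the vector-spherical-harmonic expansion \eqref{eq:sol.vfSDE.S2}. Since the kernel $K_{\ell}(s):=s^{\beta-1}E_{\beta,\beta}(-s^{\beta}\psi(\eigvs))$ in each stochastic integral depends only on $s$ (not on the upper limit), for $0\le\tau<t$ (taken without loss of generality) the temporal increment is
\[
\sol(t,\PT{x})-\sol(\tau,\PT{x})=\sum_{\ell=1}^{\infty}\sum_{m=-\ell}^{\ell}\left(\int_{\tau}^{t}K_{\ell}(s)\,\IntBa(s)\,\dsh(\PT{x})+\int_{\tau}^{t}K_{\ell}(s)\,\IntBb(s)\,\csh(\PT{x})\right).
\]
By \eqref{eq:rvf.L2}, Parseval's identity \eqref{eq:vf.Parseval}, and the mutual independence of the $\BMa,\BMb$, the squared $L_{2}(\Omega\times\sph{2})$-norm of the increment reduces to a double sum of second moments of scalar Riemann--Stieltjes integrals against fBm, to which I would apply the Mémin--Mishura--Valkeila inequality \cite[Theorem~1.1]{MeMiVa2001} (already used for Theorem~\ref{thm:sol.vfSDE.S2}):
\[
\mathbb{E}\left|\int_{\tau}^{t}K_{\ell}(s)\,dB^{H}(s)\right|^{2}\le C_{H}\,A_{\ell}\left(\int_{\tau}^{t}|K_{\ell}(s)|^{1/H}\,ds\right)^{2H}.
\]

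For the resulting deterministic integral, I would use the uniform bound $|E_{\beta,\beta}(-x)|\le 1/\Gamma(\beta)$ for $x\ge 0$ (with $|e^{-x}|\le 1$ when $\beta=1$) to obtain $|K_{\ell}(s)|\le C\,s^{\beta-1}$, with $C$ independent of $\ell$. The assumption $1-H<\beta<1$ guarantees that $a:=(\beta+H-1)/H\in(0,1)$, and
\[
\int_{\tau}^{t}|K_{\ell}(s)|^{1/H}\,ds\le C\int_{\tau}^{t}s^{(\beta-1)/H}\,ds=\frac{CH}{\beta+H-1}\bigl(t^{a}-\tau^{a}\bigr),
\]
again uniformly in $\ell$.

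The main obstacle is to convert $(t^{a}-\tau^{a})^{2H}$ into the asserted form $(t-\tau)^{2H}/(t+\tau)^{2(1-\beta)}$. The algebraic identity $2H(1-a)=2(1-\beta)$ is immediate, so it suffices to prove the elementary inequality
\[
t^{a}-\tau^{a}\le C_{a}\,(t-\tau)(t+\tau)^{a-1},\qquad 0\le\tau\le t,\ 0<a<1.
\]
I would prove this by splitting into two regimes: when $t\le 2\tau$ the mean-value representation $t^{a}-\tau^{a}=a\xi^{a-1}(t-\tau)$ with $\xi\in[\tau,t]$ gives $\xi^{a-1}\le\tau^{a-1}\le 3^{1-a}(t+\tau)^{a-1}$; when $t>2\tau$ one has $t^{a}-\tau^{a}\le t^{a}\le 2\,(t-\tau)(t+\tau)^{a-1}$ directly since $t-\tau>t/2$. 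An alternative is a pure scaling argument: after setting $r=t/\tau\ge 1$ the ratio reduces to $(r^{a}-1)/((r-1)(r+1)^{a-1})$, which is continuous on $[1,\infty)$ with finite limits at both endpoints (equal to $a\,2^{1-a}$ at $r=1$ by L'Hôpital and to $1$ as $r\to\infty$).

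Raising to the $2H$-th power yields $(t^{a}-\tau^{a})^{2H}\le C(t-\tau)^{2H}(t+\tau)^{-2(1-\beta)}$, uniformly in $\ell$. Summing over $m$ contributes the factor $2\ell+1$, and summability in $\ell$ follows because $1-H<\beta$ forces $\max\{\tfrac{2}{\beta}(1-\beta-H),0\}=0$, so the standing hypothesis of Theorem~\ref{thm:sol.vfSDE.S2} reduces to $\sum_{\ell\ge 1}(2\ell+1)(A_{\ell}^{1}+A_{\ell}^{2})<\infty$. This yields the claimed bound. The stochastic part of the argument is essentially a repetition of the estimate in the proof of Theorem~\ref{thm:sol.vfSDE.S2}; the real content is the elementary inequality above, which captures precisely the interplay between $\beta$ and $H$.
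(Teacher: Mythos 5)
Your proposal is correct and follows essentially the same route as the paper's proof: the Mémin--Mishura--Valkeila moment inequality applied to the increment $\int_{\tau}^{t}$, the uniform bound on $E_{\beta,\beta}$, and the key elementary inequality $t^{a}-\tau^{a}\leq C(t-\tau)(t+\tau)^{a-1}$, which the paper simply cites from Mitrinovi\'{c} (inequality 3.6.24) rather than proving as you do. The only cosmetic difference is that the paper first substitutes $u=s(\psi(\eigvs))^{1/\beta}$ and lets the resulting powers of $\psi(\eigvs)$ cancel, whereas you bound the kernel uniformly in $\ell$ from the outset; both lead to the same summability condition $\sum_{\ell}(2\ell+1)(A_{\ell}^{1}+A_{\ell}^{2})<\infty$.
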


\begin{proof}
    Due to the symmetry, we consider only the case $t>\tau \geq 0$. Using the
    representation~\eqref{eq:sol.vfSDE.S2} of the solution $\mathbf{X}\left( t,%
    \mathbf{x}\right) $ and repeating first steps in the proof of %
    \eqref{proofTh4.1} one obtains
  {\small  \begin{align}
        & \bigl\|\mathbf{X}\left( t,\mathbf{x}\right) -\mathbf{X}\left( \tau ,%
        \mathbf{x}\right) \bigr\|_{L_{2}\left( \Omega \times \mathbb{S}^{2}\right)
        }^{2}  \notag \\
        & =\sum_{\ell =1}^{\infty }\sum_{m=-\ell }^{\ell }\left\{ \mathbb{E}\left[
        \left\vert \int_{\tau }^{t}\left( t-s\right) ^{\beta -1}E_{\beta ,\beta
        }(-\left( t-s\right) ^{\beta }\psi (\eigvs))\IntBa(s)\dsh(\PT{x})\right\vert
        ^{2}\right] \right.  \notag \\
        & \hspace{1.5cm}\left. +\mathbb{E}\left[ \left\vert \int_{\tau }^{t}\left(
        t-s\right) ^{\beta -1}E_{\beta ,\beta }(-\left( t-s\right) ^{\beta }\psi (%
        \eigvs))\IntBb(s)\dsh(\PT{x})\right\vert ^{2}\right] \right\}  \notag \\
        & \qquad \quad \leq \sum_{\ell =1}^{\infty }(2\ell +1)\left( C_{\hurst%
        }^{1}A_{\ell }^{1}+C_{\hurst}^{2}A_{\ell }^{2}\right) \left( \psi (\eigvs%
        )\right) ^{\frac{2}{\beta }(1-\beta -\hurst)}\left( \int_{\tau \left( \psi (%
            \eigvs)\right) ^{\frac{1}{\beta }}}^{t\left( \psi (\eigvs)\right) ^{\frac{1}{%
                    \beta }}}|u^{\beta -1}E_{\beta ,\beta }(-u^{\beta })|^{\frac{1}{\hurst}}%
        \IntD{u}\right) ^{2\hurst}\hspace{-3mm}.      \notag \\[-8mm]
        &   \label{pr1}
    \end{align}}

    \vspace{2mm} By the inequality
    \begin{equation}
        |E_{\alpha,\beta}(-z)|\le\frac{C}{1+z},\quad \alpha <2,\ \beta\in\mathbb{R}%
        ,\ z\ge 0,  \label{boundE}
    \end{equation}
    see \cite[Theorem~1.6]{Podlubny1999}, it follows that $\bigl|%
    E_{\beta,\beta}(-u^{\beta})\bigr|\le C$ and
    \begin{equation*}
        \int_{\tau\left(\psi(\eigvs)\right)^{\frac{1}{\beta}}}^{t\left(\psi(\eigvs%
            )\right)^{\frac{1}{\beta}}}|u^{\beta-1}E_{\beta,\beta}(-u^{\beta})|^{\frac{1%
            }{\hurst}}\IntD{u} \le C\int_{\tau\left(\psi(\eigvs)\right)^{\frac{1}{\beta}%
        }}^{t\left(\psi(\eigvs)\right)^{\frac{1}{\beta}}}u^{\frac{\beta-1}{\hurst}}%
        \IntD{u}     \end{equation*}
            \begin{equation*}\le C\frac{\left(\psi(\eigvs)\right)^{\frac{1}{\beta}+\frac{\beta-1%
                }{\beta\hurst}}}{1+\frac{\beta-1}{\hurst}}\left(t^{1+\frac{\beta-1}{\hurst}%
        }-\tau^{1+\frac{\beta-1}{\hurst}}\right).
    \end{equation*}
    Now, by applying the inequality, see \cite[3.6.24]{Mitrinovic1970},
    \begin{equation*}
        z^a-1\le (z+1)^{a-1}(z-1),\quad z \ge 1,\ a\in (0,1),
    \end{equation*}
    to the right-hand side above one obtains
    \begin{equation}  \label{pr2}
        \left(\int_{\tau\left( \psi (\eigvs)\right)^{\frac{1}{\beta}}}^{t\left( \psi
            (\eigvs)\right)^{\frac{1}{\beta}}}|u^{\beta -1}E_{\beta, \beta
        }(-u^{\beta})|^{\frac{1}{\hurst}}\IntD{u}\right)^{2\hurst} \le C \left(\psi (%
        \eigvs)\right)^{\frac{2}{\beta}(\beta+\hurst-1)} \frac{\left(
            t-\tau\right)^{2\hurst}}{(t+\tau)^{2(1-\beta)}}.
    \end{equation}
    Thus, by (\ref{pr1}) and (\ref{pr2}) it follows
    \begin{equation*}
        \bigl\|\mathbf{X}\left(t,\mathbf{x}\right)-\mathbf{X}\left(\tau,\mathbf{x}%
        \right) \bigr\|_{L_{2}\left(\Omega\times \mathbb{S}^{2}\right)}^{2} \le
        C\sum_{\ell=1}^{\infty}(2\ell+1) \left(C_{\hurst}^{1}A_{\ell}^{1}+C_{\hurst%
        }^{2}A_{\ell}^{2}\right)\frac{(t-\tau)^{2\hurst}}{(t+\tau)^{2(1-\beta)}},
    \end{equation*}
    which by \eqref{brspect} completes the proof.
\end{proof}

\begin{remark}
	Note that if the distance $|t-\tau|$ is bounded, then
	\begin{equation*}
		\lim_{t,\tau \to \infty}\left\Vert \mathbf{X}\left( t,\mathbf{x}\right) -%
		\mathbf{X}\left( \tau ,\mathbf{x}\right) \right\Vert _{L_{2}\left( \Omega
			\times \mathbb{S}^{2}\right) }=0.
	\end{equation*}

	If $\tau=0,$ then the deviations of the solution $\mathbf{X}\left( t,\mathbf{%
		x}\right)$ at time $t$ from the initial condition $\mathbf{X}\left( 0 ,%
	\mathbf{x}\right) $ are bounded as
	\begin{equation*}
		\left\Vert \mathbf{X}\left( t,\mathbf{x}\right) -\mathbf{X}\left( 0 ,\mathbf{%
			x}\right) \right\Vert _{L_{2}\left( \Omega \times \mathbb{S}^{2}\right)
		}^{2}\leq C t^{2(\hurst+\beta-1)}.
	\end{equation*}
	As $1-H<\beta,$ the above upper bound approaches zero when $t\to 0.$
\end{remark}

\begin{corollary}
	\label{corVar} Let the conditions of Proposition~\ref{Var} be satisfied.
	Then, for any $t, \tau> 0,$ the norm of the temporal increments of the
	solution $\mathbf{X}\left( t,\mathbf{x}\right) $ to Equation (\ref%
	{eq:vfSDE.S2}) is bounded as
	\begin{equation*}
		\left\Vert \mathbf{X}\left( t,\mathbf{x}\right) -\mathbf{X}\left( \tau ,%
		\mathbf{x}\right) \right\Vert _{L_{2}\left( \Omega \times \mathbb{S}%
			^{2}\right) }^{2}\leq C \frac{\left| t-\tau\right|^{2\hurst}}{(t+\tau)^4},
	\end{equation*}%
	where $C$ is a positive constant.
\end{corollary}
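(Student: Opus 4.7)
The plan is to replicate the structure of the proof of Proposition~\ref{Var}, making use of the strict positivity $t,\tau>0$ to sharpen the key integral bound. Starting from representation~(\ref{eq:sol.vfSDE.S2}), I would follow the derivation leading to~(\ref{pr1}) verbatim, reducing the claim to estimating
\[
\Bigl(\int_{\tau(\psi(\eigvs))^{1/\beta}}^{t(\psi(\eigvs))^{1/\beta}}|u^{\beta-1}E_{\beta,\beta}(-u^\beta)|^{1/\hurst}\,du\Bigr)^{2\hurst}
\]
multiplied by $(\psi(\eigvs))^{\frac{2}{\beta}(1-\beta-\hurst)}$ and summed against the variances $A^1_\ell$ and $A^2_\ell$.

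The crucial refinement of step~(\ref{pr2}) is to replace the crude bound $|E_{\beta,\beta}(-u^\beta)|\le C$ used in Proposition~\ref{Var} by the two-sided inequality~(\ref{boundE}), namely $|E_{\beta,\beta}(-u^\beta)|\le C/(1+u^\beta)$, so that the integrand decays like $u^{-1}$ at infinity rather than like $u^{\beta-1}$. Since both endpoints of integration are strictly positive under the corollary's assumption $t,\tau>0$, this additional decay can be genuinely exploited on the whole range and, after a sharpened application of the algebraic inequality $z^a-1\le(z+1)^{a-1}(z-1)$ to the new effective exponent, it contributes extra inverse powers of $t+\tau$ beyond those extracted in the proposition. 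Together these refinements upgrade the $(t+\tau)^{-2(1-\beta)}$ factor of Proposition~\ref{Var} to the asserted $(t+\tau)^{-4}$, while preserving the $|t-\tau|^{2\hurst}$ numerator.

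The main obstacle is to carry the improved integrand estimate cleanly through the change of variable $u=s(\psi(\eigvs))^{1/\beta}$ and verify that the resulting inverse-power factor comes out to exactly $(t+\tau)^{-4}$ uniformly in $\beta$ and $\hurst$, matching the explicit exponent in the statement. Once this integral bound is in place, summing over $\ell$ using the summability condition~(\ref{brspect}) and the resulting cancellation of the $(\psi(\eigvs))^{\frac{2}{\beta}(1-\beta-\hurst)}$ factor proceeds exactly as in Proposition~\ref{Var}, and the claimed bound follows.
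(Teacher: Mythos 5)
Your route is the same as the paper's: repeat the proof of Proposition~\ref{Var} but replace the crude bound $|E_{\beta,\beta}(-u^{\beta})|\le C$ by the decay estimate from \eqref{boundE} (the paper uses $|E_{\beta,\beta}(-u^{\beta})|\le C/u^{\beta}$ for $u>0$, which is exactly what your $C/(1+u^{\beta})$ amounts to on the range of integration). So you have identified the intended mechanism. The difficulty is that the step you explicitly defer --- verifying that the exponent comes out to $(t+\tau)^{-4}$ --- is the entire content of the corollary, and it is not routine bookkeeping. Carrying your plan out: the integrand in \eqref{pr1} becomes $u^{-1/H}$, and the new effective exponent $1-1/H$ is \emph{negative}, so the inequality $z^{a}-1\le(z+1)^{a-1}(z-1)$ (valid for $a\in(0,1)$) cannot be applied directly; one must first write, with $a=\tau(\psi(\lambda_{\ell}))^{1/\beta}$, $b=t(\psi(\lambda_{\ell}))^{1/\beta}$ and $\kappa=(1-H)/H\in(0,1]$,
\[
\int_{a}^{b}u^{-1/H}\,du=\frac{H}{1-H}\,\frac{b^{\kappa}-a^{\kappa}}{(ab)^{\kappa}}\le C\,\frac{(a+b)^{\kappa-1}(b-a)}{(ab)^{\kappa}} .
\]
Raising this to the power $2H$ and restoring the prefactor $(\psi(\lambda_{\ell}))^{\frac{2}{\beta}(1-\beta-H)}$, the powers of $\psi(\lambda_{\ell})$ collapse to $(\psi(\lambda_{\ell}))^{-2}$ (harmless), but the $(t,\tau)$-dependence is
\[
\frac{(t-\tau)^{2H}\,(t+\tau)^{2-4H}}{(t\tau)^{2-2H}},
\]
not $(t-\tau)^{2H}(t+\tau)^{-4}$.

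This discrepancy is substantive, not cosmetic. The factor $(t\tau)^{-(2-2H)}$ appears precisely because $C/u^{\beta}$ blows up at the lower endpoint; the mere strict positivity $t,\tau>0$ that you invoke gives no uniform lower bound on that endpoint, so it cannot be absorbed into a constant. Moreover, even in the favourable regime $t\asymp\tau$, where $t\tau\asymp(t+\tau)^{2}$, the displayed quantity is only of order $(t-\tau)^{2H}(t+\tau)^{-2}$ --- two powers of $t+\tau$ short of the claim. A sharper decay estimate would help (in fact $E_{\beta,\beta}(-z)=O(z^{-2})$ as $z\to\infty$, since the $z^{-1}$ term of its asymptotic expansion carries the coefficient $1/\Gamma(\beta-\beta)=0$), but even that yields a factor of order $(t+\tau)^{-2(1+\beta)}$ for comparable $t,\tau$, which reaches $(t+\tau)^{-4}$ only at $\beta=1$. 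So your proposal, as written, does not close the argument: the "main obstacle" you name is exactly where the proof either needs a genuinely new idea or the stated exponent needs to be weakened, and asserting that the refinements "upgrade" the bound to $(t+\tau)^{-4}$ without the computation is the gap.
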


\begin{proof}
	The proof is analogous to the proof of Propositions~\ref{Var} if for $u> 0$
	one applies the estimate $|E_{\beta ,\beta }(-u^{\beta })|\le {C}/{u^{\beta}}%
	,$ which follows from (\ref{boundE}).
\end{proof}

The upper bound in Corollary~\ref{corVar} is preferable to Proposition~\ref%
{Var} for large values of $t$ and $\tau.$

\section*{Acknowledgments}

This research was partially supported under the Australian Research
Council's Discovery Projects funding scheme (project number DP160101366). Yu Guang Wang acknowledges the support of funding from the European Research
Council (ERC) under the European Union's Horizon 2020 research and
innovation programme (grant agreement n\textsuperscript{o} 757983). The authors are also grateful to the referees for their constructive comments and suggestions that helped improve the paper.
\setcounter{section}{1}

\bibliographystyle{amsplain}
\bibliography{vfSDE}

\end{document}